\newcommand{\e}{\varepsilon}
\newcommand{\D}{\Omega}
\newcommand{\rl}{{\mathbb{R}}}
\newcommand{\cx}{{\mathbb{C}}}
\newcommand{\cn}{{\mathbb{C}^n}}
\newcommand{\Z}{\mathbb{Z}}
\newcommand{\dbar}{\overline{\partial}}
\theoremstyle{plain}
\newtheorem{theorem}[equation]{Theorem}
\newtheorem{proposition}[equation]{Proposition}
\newtheorem{lemma}[equation]{Lemma}
\newtheorem{corollary}[equation]{Corollary}
\theoremstyle{remark}
\newtheorem{remark}[equation]{Remark}
\numberwithin{equation}{section}
\newcommand{\Bp}{\mathbf{B}}
\newcommand{\Fo}{\mathbf{F}}
\newcommand{\s}{\mathbf{F}^2}
\newcommand{\Ht}{\mathbb{H}}
\newcommand{\Ud}{\mathbb{D}}
\title[Smoothing Properties of the Friedrichs Operator]{Smoothing Properties of the Friedrichs Operator on $L^p$ spaces}
\author{Liwei Chen}
\address[Liwei Chen]{The Ohio State University, Department of
Mathematics, Columbus, OH 43210}
\email{chen.1690@osu.edu}
\author{Yunus E. Zeytuncu}
\address[Yunus E. Zeytuncu]{University of Michigan - Dearborn, Department of Mathematics and Statistics, Dearborn, MI 48128}
\email{zeytuncu@umich.edu}
\subjclass[2010]{Primary: 32A25, Secondary: 32A36}
\thanks{The work of the second author is partially supported by a grant from the Simons Foundation (\#353525).}
\keywords{Friedrichs operators, Smoothing operators, Hartogs triangle}
\begin{document}

\maketitle
\begin{abstract}
We show that the Friedrichs operator exhibits smoothing properties in the $L^p$ scale. In particular we prove that on any smoothly bounded pseudoconvex domain the Friedrichs operator maps $A^2(\D)$ to $A^p(\D)$ for some $p>2$.
\end{abstract}

\section{Introduction}
Let $\Omega\subset \cn$ be a bounded domain. For $1\leq p<\infty$, $A^p(\Omega)=L^p(\Omega)\cap\mathcal{O}(\D)$ denotes the space of holomorphic functions on $\D$ that are $p$-integrable with respect to the Lebesgue measure on $\cn$. When $p=\infty$, we use the notation $H^{\infty}(\Omega)=L^{\infty}(\Omega)\cap\mathcal{O}(\D)$ to denote the space of bounded holomorphic functions on $\D$.

The Bergman projection operator $\Bp$ is the orthogonal projection operator from $L^2(\D)$ onto the Bergman space $A^2(\D)$. A closely related operator to $\Bp$ is the Friedrichs operator $\Fo$ on $A^2(\Omega)$ defined by
\begin{align*}
\mathbf{F}&: A^2(\Omega)\to A^2(\Omega)\\
\mathbf{F}&(g)=\mathbf{B}(\overline{g}).
\end{align*}
Originally introduced in \cite{Friedrichs}, $\Fo$ has been studied extensively on planar domains, see \cite{ShapiroUnbddQuad, ShapiroBook, PutinarShapiro1, PutinarShapiro2}. In particular, it is known that a planar domain is a quadrature domain if and only if the corresponding Friedrichs operator is of finite rank \cite[Theorem 2.4]{PutinarShapiro1}. Recently this operator has been studied on higher dimensional domains, and interesting smoothing properties have been obtained. It is known that the analytic properties of $\Bp$ depend on the geometry of the underlying domain $\D$. Under various geometric conditions on $\D$, $\Bp$ preserves function spaces such as $W^s(\D), L^p(\D)$, and $C^{\infty}(\overline{\D})$. However, since $\Bp$ reproduces holomorphic functions, it generally does not smooth the input function (see however \cite[Theorem 1.3]{HerMcN10} for a partial smoothing property). On the other hand, it is noticed in \cite{HerMcN10, HerMcNStr, RavZey} that $\Fo$ may demonstrate smoothing properties. Indeed, if $\D$ is smooth and satisfies condition R, then $\Fo$ maps any function in $A^2(\D)$ into 
$C^{\infty}(\overline{\D})$ \cite[Corollary 1.12]{HerMcNStr}. Furthermore, under some symmetry assumptions $\Fo$ holomorphically extends any input function to a strictly larger domain \cite[Theorem 1.1]{RavZey}.

In this note we study the smoothing properties of the Friedrichs operator in the $L^p$ scale. As in the Sobolev scale, we prove that $\Fo$ improves integrability. We start with a result similar to \cite[Theorem 1.1]{HerMcNStr} and illustrate how $\Fo$ gains $L^p$ integrability if $\Bp$ satisfies certain $L^p$ estimates.

\begin{theorem}
\label{lemFriedrichs1}
Let $\D\subset \cn$ be a smoothly bounded pseudoconvex domain and $p>2$. Suppose that there exists $r\in[p,\infty)$ such that $\Bp:L^r(\Omega)\to A^{p}(\Omega)$ is bounded, then $\Fo:A^2(\Omega)\to A^p(\Omega)$ is bounded. Moreover, for $g\in A^2(\Omega)$, we have
\[
\|\Fo(g)\|_{L^p}\le C\|g\|_{L^1}.
\]
\end{theorem}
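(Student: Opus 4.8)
The starting point is the defining identity $\Fo(g)=\Bp(\bar g)$, which displays $\Fo$ as the Bergman projection of the conjugate-holomorphic function $\bar g$; since $\|g\|_{L^1}\le C\|g\|_{L^2}$ on the bounded domain $\Omega$, the claimed estimate $\|\Fo(g)\|_{L^p}\le C\|g\|_{L^1}$ already subsumes the boundedness of $\Fo\colon A^2(\Omega)\to A^p(\Omega)$, so it is enough to prove the inequality. Two features must be handled. First, $\bar g$ lies only in $L^2(\Omega)$, not in $L^r(\Omega)$, so the hypothesis on $\Bp$ cannot be applied to $\bar g$ directly. Second, $\|g\|_{L^1}$ does not control $\|g\|_{L^2}$, so the holomorphicity of $g$ is available only through interior (sub-mean-value) estimates: $\|g\|_{L^1}$ controls $|g|$ and all its derivatives on every compact $K\Subset\Omega$.

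The plan is to localize. Fix $\delta>0$ and a cutoff $\chi\in C_c^\infty(\Omega)$, $0\le\chi\le1$, with $\chi\equiv1$ on $\Omega_\delta:=\{z\in\Omega:\operatorname{dist}(z,\partial\Omega)>\delta\}$, and split
\[
\Fo(g)=\Bp(\chi\bar g)+\Bp\bigl((1-\chi)\bar g\bigr).
\]
For the interior piece, the sub-mean-value property gives $\|\chi\bar g\|_{L^\infty}\le C_\delta\|g\|_{L^1}$, and since $\chi\bar g$ is compactly supported it lies in $L^r(\Omega)$ with $\|\chi\bar g\|_{L^r}\le C_\delta\|g\|_{L^1}$; the hypothesis $\Bp\colon L^r(\Omega)\to A^p(\Omega)$ then yields $\|\Bp(\chi\bar g)\|_{L^p}\le C\|g\|_{L^1}$. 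The boundary piece is the heart of the matter. Writing $(1-\chi)\bar g=\overline{(1-\chi)g}$, one exploits that $\dbar\bigl((1-\chi)g\bigr)=-(\dbar\chi)\,g$ is $\dbar$-closed, supported on the compact shell $\operatorname{supp}(\dbar\chi)\Subset\Omega$, and bounded there by $C_\delta\|g\|_{L^1}$, hence lies in $L^r$ with the same control. Invoking pseudoconvexity of $\Omega$ — through Hörmander's $L^2$-estimates and Kohn's identity $\Bp=I-\dbar^{\ast}N\dbar$ — one solves this $\dbar$-problem by some $v$ with $\|v\|_{L^2}\le C\|g\|_{L^1}$ and $(1-\chi)g-v\in A^2(\Omega)$, and then follows the induced decomposition $\Bp\bigl(\overline{(1-\chi)g}\bigr)=\Bp(\bar v)+\Fo\bigl((1-\chi)g-v\bigr)$; after a further cutoff the anti-holomorphic remainders are compactly supported in the interior (again by the interior estimates, now for $v$ and for the new holomorphic summand), so the hypothesis on $\Bp$ reapplies to them and the estimate closes. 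Turning this boundary contribution into a genuine $L^p$-function, and doing so with $L^1$-control rather than merely $L^2$-control of the constant throughout the iteration, is where I expect the substantive work to lie; it is the $L^p$-scale counterpart of the regularity mechanism behind \cite[Theorem 1.1]{HerMcNStr}, and it is exactly the place where the assumed smoothing of $\Bp$ is consumed.

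Finally, once the inequality is established for $g$ ranging over a subclass on which the $\dbar$-manipulations are transparent, it passes to all of $A^2(\Omega)$ by approximation: if $g_k\to g$ in $A^2(\Omega)$ then $g_k\to g$ in $A^1(\Omega)$ and $\Fo(g_k)\to\Fo(g)$ in $A^2(\Omega)$, so the uniform bound $\|\Fo(g_k)\|_{L^p}\le C\|g_k\|_{L^1}$, together with the reflexivity of $L^p$ for $p>1$ and the weak lower semicontinuity of the $L^p$-norm, forces $\Fo(g)\in A^p(\Omega)$ with $\|\Fo(g)\|_{L^p}\le C\|g\|_{L^1}$.
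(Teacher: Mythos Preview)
Your plan has a genuine gap in the boundary term, and the paper takes an entirely different route that sidesteps the difficulty you identified.

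\textbf{The gap.} After writing $\Bp\bigl(\overline{(1-\chi)g}\bigr)=\Bp(\bar v)+\Fo(h')$ with $h'=(1-\chi)g-v$ holomorphic, you are back where you started: $h'$ is a holomorphic function with $\|h'\|_{L^1}\le C\|g\|_{L^1}$, and near $\partial\Omega$ one has $h'=g-v$. However one manufactures $v$ (even by solving $\dbar v=-(\dbar\chi)g$ on a ball containing $\overline\Omega$, so that $v\in C^\infty(\overline\Omega)$), the correction $v$ is smooth up to the boundary while $g$ need not be, so $h'$ carries exactly the same boundary singularity as $g$. No gain has been produced, and a further cutoff simply regenerates the same boundary term with a comparable constant; the recursion does not close, and there is no parameter to sum. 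Your remark that ``interior estimates'' apply to $v$ is also off: $v$ is not holomorphic, so the sub-mean-value argument is unavailable for it. Finally, invoking the mechanism of \cite[Theorem~1.1]{HerMcNStr} by analogy does not help here, because that argument consumes Sobolev-scale regularity of $\Bp$ (condition~R), which is neither assumed nor a consequence of the $L^r\to L^p$ hypothesis.

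\textbf{What the paper does instead.} The paper moves to the dual side. Since $\Fo(g)=\Bp(\bar g)$ and $\Bp$ is self-adjoint,
\[
\|\Fo(g)\|_{L^p}=\sup_{\|h\|_{L^{p'}}=1}\bigl|\langle \bar g,\Bp(h)\rangle\bigr|=\sup_{\|h\|_{L^{p'}}=1}\bigl|\langle 1,\,g\,\Bp(h)\rangle\bigr|.
\]
The hypothesis $\Bp:L^r\to A^p$ dualizes (again via self-adjointness) to $\Bp:L^{p'}\to A^{r'}$. Bell's lemma supplies an operator $\Phi^s:HW^s(\Omega)\to W_0^s(\Omega)$ with $\Bp\Phi^s=\mathrm{Id}$; since $g\,\Bp(h)$ is holomorphic, one may replace the constant $1$ by $\Phi^s(1)$ in the pairing. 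Choosing $s$ large, Sobolev embedding gives $|\Phi^s(1)|\le M\delta^{2n}$, while the mean-value inequality for the holomorphic function $\Bp(h)$ gives $|\Bp(h)(z)|\le C\delta(z)^{-2n}\|\Bp(h)\|_{L^{r'}}\le C\delta(z)^{-2n}\|h\|_{L^{p'}}$. The powers of $\delta$ cancel pointwise, and the pairing is bounded by $C\int_\Omega|g|\,dV=C\|g\|_{L^1}$. Thus the whole argument is a single duality step plus Bell's operator, with no cutoffs, no $\dbar$-equation, and no iteration.
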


Using this result, the Sobolev emdedding theorem, and the weighted results in \cite{BerChar} we also obtain low-level $L^p$ regularity of $\Fo$ on general domains. 
\begin{theorem}
\label{mainresult}
Let $\D\subset \cn$ be a smoothly bounded pseudoconvex domain. There exists $\e=\e(n,\Omega)>0$ such that $\Fo:A^2(\Omega)\to A^{p}(\Omega)$ is bounded for any $p\in[2,2+\e)$. Moreover, for $2\le p<2+\e$ and $g\in A^2(\Omega)$, we have
\begin{equation}
\|\Fo(g)\|_{L^p}\le C\|g\|_{L^1}.
\end{equation}
\end{theorem}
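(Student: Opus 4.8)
The plan is to derive Theorem~\ref{mainresult} from Theorem~\ref{lemFriedrichs1}. By that theorem it suffices to produce a \emph{single} exponent $p>2$ together with some $r\in[p,\infty)$ for which $\Bp\colon L^r(\D)\to A^p(\D)$ is bounded; once $\Fo\colon A^2(\D)\to A^p(\D)$ is known to be bounded with $\|\Fo(g)\|_{L^p}\le C\|g\|_{L^1}$, the remaining range $p'\in[2,p)$ comes for free from the elementary inclusion $A^p(\D)\hookrightarrow A^{p'}(\D)$ and the estimate $\|\Fo(g)\|_{L^{p'}}\le|\D|^{\frac1{p'}-\frac1p}\|\Fo(g)\|_{L^p}$, valid because $\D$ is bounded, and one then simply sets $\e:=p-2$. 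So the entire task is to route $\Bp$ from some $L^r(\D)$ into some $A^p(\D)$ with $p>2$, and this is where the weighted $L^2$ estimates of Berndtsson--Charpentier~\cite{BerChar} enter.

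Write $\delta(z)=\mathrm{dist}(z,\partial\D)$. Since $\D$ is smoothly bounded and pseudoconvex, \cite{BerChar} supplies a constant $a_0=a_0(\D)>0$ such that $\Bp$ is bounded on $L^2(\D,\delta^{-a}\,dV)$ for every $a\in[0,a_0)$. I would fix $a\in\bigl(0,\min\{a_0,\tfrac13\}\bigr)$ and sandwich this estimate between two elementary inclusions. On the source side, Hölder's inequality together with the integrability $\delta^{-a}\in L^s(\D)$ for $as<1$ gives a bounded inclusion $L^r(\D)\hookrightarrow L^2(\D,\delta^{-a}\,dV)$ for every $r>\tfrac2{1-a}$. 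On the target side, a Hardy--Littlewood type estimate for holomorphic functions (modelled on the normal-direction computation $\int_0^1 t^{2k}(1-t)^{-a}\,dt\asymp k^{a-1}$ as $k\to\infty$) shows that $h\in\mathcal O(\D)$ with $\int_\D|h|^2\delta^{-a}\,dV<\infty$ lies in $W^{a/2}(\D)$, whence by the Sobolev embedding theorem $h\in L^p(\D)$ with $\tfrac1p=\tfrac12-\tfrac{a}{4n}$, i.e.\ $p=\tfrac{4n}{2n-a}>2$. Choosing $r=3$ — which, since $a<\tfrac13$, satisfies both $3>\tfrac2{1-a}$ and $3\ge p$ (indeed $p<\tfrac{12}{5}$) — the composition
\[
L^3(\D)\ \hookrightarrow\ L^2(\D,\delta^{-a}\,dV)\ \xrightarrow{\ \Bp\ }\ L^2(\D,\delta^{-a}\,dV)\cap\mathcal O(\D)\ \hookrightarrow\ A^p(\D)
\]
is bounded, which is exactly the hypothesis of Theorem~\ref{lemFriedrichs1} with the pair $(p,r)=\bigl(\tfrac{4n}{2n-a},\,3\bigr)$.

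Applying Theorem~\ref{lemFriedrichs1} then yields $\Fo\colon A^2(\D)\to A^p(\D)$ bounded with $\|\Fo(g)\|_{L^p}\le C\|g\|_{L^1}$, and the reduction in the first paragraph promotes this to every $p'\in[2,2+\e)$ with $\e:=p-2=\tfrac{2a}{2n-a}>0$, completing the proof. I expect the one genuinely delicate point to be the target-side step: the small amount of boundary weight $a$ that \cite{BerChar} allows must translate, through the holomorphic Hardy--Littlewood inequality and Sobolev embedding, into an honest gain $p>2$, while still leaving room to pick a \emph{finite} $r\ge p$ with $L^r(\D)\hookrightarrow L^2(\D,\delta^{-a}\,dV)$. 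Both constraints are satisfied as soon as $a$ is taken small, because $p\to2$ and $\tfrac2{1-a}\to2$ as $a\to0^+$ while $r$ may be enlarged at will; everything else is bookkeeping around the Berndtsson--Charpentier estimate.
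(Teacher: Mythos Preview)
Your proposal is correct and follows essentially the same route as the paper: the paper isolates the chain
\[
L^r(\D)\ \hookrightarrow\ L^2(\D,\delta^{-2s})\ \xrightarrow{\ \Bp\ }\ L^2(\D,\delta^{-2s})\cap\mathcal O(\D)\ \hookrightarrow\ W^s(\D)\ \hookrightarrow\ A^p(\D)
\]
as a separate lemma (Lemma~\ref{lemBergman}), citing \cite{JerKen,Det} for the step you call the ``Hardy--Littlewood type estimate'' and \cite{BerChar} for the weighted $L^2$ boundedness, and then feeds the resulting $\Bp\colon L^r\to A^p$ into Theorem~\ref{lemFriedrichs1} exactly as you do. The only cosmetic difference is that the paper keeps $r\ge\tfrac{2}{1-\eta}$ general (with $\eta$ the Diederich--Forn\ae ss exponent) and so obtains the slightly sharper $\e=\tfrac{2\eta}{2n-\eta}$, whereas your choice $r=3$, $a<\tfrac13$ sacrifices a little in the constant but of course suffices for the existence statement.
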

We note that a similar result for $\Bp$ is not known. It is noticed in some recent papers \cite{Zey13, Chen14, ChenZey, EdhMcN16, EdhMcN17} that on some non-smooth domains $\Bp$ exhibits degenerate $L^p$ regularity; however, it is not known if a similar result holds on smooth domains. Recently, in \cite{HarZey} low-level $L^p$ regularity is obtained for the $\dbar$-Neumann operator, and the canonical solutions operators, but not for the Bergman projection.  Furthermore, this low-level $L^p$ boundedness of $\Fo$ can be also turned into low-level Sobolev boundedness, with a gain compared to the similar statement for $\Bp$ in \cite{Kohn, BerChar}.

\begin{theorem}
\label{lowSob}
Let $\D\subset \cn$ be a smoothly bounded pseudoconvex domain, then $\Fo:A^2(\Omega)\to W^s(\Omega)$ is bounded for any $s<\eta/(4n)$, where $\eta$ is the Diedrich-Fornaess exponent of $\Omega$.
\end{theorem}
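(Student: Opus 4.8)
The plan is to deduce Theorem~\ref{lowSob} from the $L^{p}$ estimate of Theorem~\ref{mainresult} together with a weighted‑$L^{2}$ description of the Sobolev spaces on the holomorphic side. Write $\delta(z)=\mathrm{dist}(z,\partial\Omega)$. First I would establish the following elementary lemma: for every $h\in\mathcal{O}(\Omega)$ and every $s\in(0,1)$,
\[
\|h\|_{W^{s}(\Omega)}^{2}\le C\,\|h\|_{L^{2}(\Omega)}^{2}+C\int_{\Omega}|h(z)|^{2}\,\delta(z)^{-2s}\,dV(z).
\]
This is proved by Cauchy estimates: for $h$ holomorphic, $|\nabla h(z)|^{2}$ is controlled pointwise by $C\,\delta(z)^{-2-2n}$ times a local $L^{2}$-average of $h$ over a Euclidean ball $B(z,\tfrac78\delta(z))\subset\Omega$, on which $\delta$ is comparable to $\delta(z)$; inserting this into the Gagliardo double integral defining $\|h\|_{W^{s}(\Omega)}$, splitting the integral according to whether $|x-y|$ is at most or at least $\tfrac14\min(\delta(x),\delta(y))$, and applying Fubini reduces everything to the weighted integral on the right. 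Only the Euclidean geometry of $\Omega$ enters here; recall $0<\eta\le 1$, so the exponents relevant below obey $\eta/(4n)\le\tfrac14<1$, well inside the admissible range $(0,1)$.

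Next I would record the quantitative content of the proof of Theorem~\ref{mainresult}: that argument in fact shows $\Fo:A^{2}(\Omega)\to A^{p}(\Omega)$ is bounded, with $\|\Fo(g)\|_{L^{p}}\le C\|g\|_{L^{1}}$, for every $p$ with $\tfrac1p>\tfrac12-\tfrac{\eta}{4n}$. Indeed, the weighted estimates of \cite{BerChar} used there yield boundedness of $\Bp$ on $L^{2}(\Omega,\delta^{-t})$ for $0\le t<\eta$; since $\Bp$ has holomorphic range, the lemma above (with $s=t/2$) upgrades this to $\Bp:L^{2}(\Omega,\delta^{-t})\to W^{t/2}(\Omega)$, and the Sobolev embedding $W^{t/2}(\Omega)\hookrightarrow L^{q}(\Omega)$ with $\tfrac1q=\tfrac12-\tfrac{t}{4n}$ then gives $\Bp:L^{r}(\Omega)\to A^{q}(\Omega)$ once $r$ is taken large enough that $L^{r}(\Omega)\hookrightarrow L^{2}(\Omega,\delta^{-t})$ and $r\ge q$. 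Applying Theorem~\ref{lemFriedrichs1} with these $q$ and $r$ produces $\Fo:A^{2}(\Omega)\to A^{q}(\Omega)$ with the asserted $L^{1}$ bound, and letting $t\uparrow\eta$ sweeps out all exponents with $\tfrac1p>\tfrac12-\tfrac{\eta}{4n}$.

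Now fix $s<\eta/(4n)$ and choose $p$ with $s<\tfrac12-\tfrac1p<\tfrac{\eta}{4n}$. By the previous paragraph $\Fo(g)\in A^{p}(\Omega)$ with $\|\Fo(g)\|_{L^{p}(\Omega)}\le C\|g\|_{L^{1}(\Omega)}$, and $\|\Fo(g)\|_{L^{2}(\Omega)}\le C\|\Fo(g)\|_{L^{p}(\Omega)}$ since $\Omega$ is bounded and $p\ge2$. Because $2s<1-\tfrac2p$, the weight $\delta^{-2s}$ lies in $L^{(p/2)'}(\Omega)$, so Hölder's inequality gives $\int_{\Omega}|\Fo(g)|^{2}\delta^{-2s}\,dV\le\|\delta^{-2s}\|_{L^{(p/2)'}(\Omega)}\,\|\Fo(g)\|_{L^{p}(\Omega)}^{2}\le C\|\Fo(g)\|_{L^{p}(\Omega)}^{2}$. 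Feeding these bounds into the lemma of the first paragraph, applied to the holomorphic function $\Fo(g)$, yields $\|\Fo(g)\|_{W^{s}(\Omega)}\le C\|g\|_{L^{1}(\Omega)}$, which is in fact slightly stronger than the stated bound by $\|g\|_{L^{2}(\Omega)}$.

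The step I expect to demand the most care is the second paragraph: one must check that composing the weighted $L^{2}$ estimate of \cite{BerChar} (responsible for the factor $\eta/2$) with the Sobolev embedding in real dimension $2n$ (which contributes the extra $1/(2n)$) produces precisely the exponent $\eta/(4n)$, and that the side condition $r\ge p$ in Theorem~\ref{lemFriedrichs1} can be met simultaneously — in other words, that the proof of Theorem~\ref{mainresult} must be reopened and its constants tracked rather than the statement invoked as a black box. Everything else is soft: the lemma is a routine Cauchy‑estimate computation, and the passage from $A^{p}$ to $W^{s}$ is just Hölder's inequality.
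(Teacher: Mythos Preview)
Your proposal is correct and follows essentially the same route as the paper: the paper's Lemma~\ref{AptoWs} uses the inequality $\|h\|_{W^s}\le C\|h\|_{L^2(\delta^{-2s})}$ for holomorphic $h$ (quoted from \cite{JerKen,Det} rather than re-proved via Cauchy estimates and the Gagliardo seminorm as you propose) followed by the identical H\"older step, and then combines this with Theorem~\ref{mainresult} and the explicit upper bound $p<4n/(2n-\eta)$ already recorded in Lemma~\ref{lemBergman}(1). Your second paragraph reopens and re-derives that $L^p$ range, but since this is precisely the content of Theorem~\ref{mainresult} together with Lemma~\ref{lemBergman} you may invoke those as black boxes; the only genuinely new work needed is the passage from $A^p$ to $W^s$, which you handle correctly and which the paper isolates as Lemma~\ref{AptoWs}.
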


As indicated above, the Bergman projection $\Bp$ can exhibit various irregularity results on non-smooth domains. So it is a natural question whether $\Fo$ still has smoothing properties on non-smooth domains. In the third section of this note we study $\Fo$ on two model domains: Hartogs triangle
\[
\Ht=\{(z_1,z_2)\in\cx^2\,|\,|z_1|<|z_2|<1\}
\]
and an exponential version of it
\[
\Ht_{\infty}=\{(z_1,z_2)\in\cx^2\,|\,|z_1|<e^{-\frac{1}{|z_2|}},z_2\in\Ud^*\}.
\]
In particular we establish gain in integrability on these domains. 
\begin{proposition}
On the Hartogs triangle $\Ht$, the Friedrichs operator
\[
\Fo: A^2(\Ht)\to A^p(\Ht)
\]
is bounded for any $p\in[2,4)$, whereas the Bergman projection $\Bp$ does not map $L^2(\Ht)$ into $A^p(\Ht)$ for any $p>2$.
\end{proposition}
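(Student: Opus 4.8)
The plan is to compute the Friedrichs operator on $\Ht$ completely, by means of a monomial basis, and to observe that it has finite rank; the proposition then reduces to deciding for which $p$ its (three–dimensional) range lies inside $A^p(\Ht)$, together with the elementary fact that $\Bp$ fixes all of $A^2(\Ht)$.

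First I would record an orthogonal basis for $A^2(\Ht)$. The biholomorphism $\Phi(z_1,z_2)=(z_1/z_2,z_2)$ maps $\Ht$ onto $\Ud\times\Ud^*$ with complex Jacobian $z_2^{-1}$, so $g\mapsto(g\circ\Phi)\,z_2^{-1}$ is a unitary isomorphism $A^2(\Ud\times\Ud^*)\to A^2(\Ht)$; since $A^2(\Ud\times\Ud^*)=A^2(\Ud\times\Ud)$ has the monomials $\{w_1^jw_2^k:j,k\ge 0\}$ as a complete orthogonal system, transporting back shows that $\{z_1^jz_2^k : j\ge 0,\ k\ge -j-1\}$ is a complete orthogonal basis of $A^2(\Ht)$, and separating the $z_1$– and $z_2$–integrals yields closed forms for the norms, e.g.\ $\|z_2^k\|_{L^2(\Ht)}^2=\pi^2/(k+2)$ for $k\ge -1$. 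Since $\Fo g=\Bp(\overline g)$ and $\Bp$ is an orthogonal projection, $\Fo$ is a (conjugate–linear) contraction of $A^2(\Ht)$, hence it is determined by its values on this basis.

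Next I would compute $\Fo(z_1^jz_2^k)=\Bp(\overline{z_1^jz_2^k})$ by expanding it against the basis: the coefficient of $z_1^az_2^b$ is a fixed multiple of $\int_{\Ht}\overline{z_1^{\,j+a}}\,\overline{z_2^{\,k+b}}\,dV$, which vanishes by the invariance of $\Ht$ under independent rotations in the two variables unless $j+a=0$ and $k+b=0$. As $j,a\ge 0$ this forces $j=a=0$ and $b=-k$, and such a term occurs only when $z_2^{-k}$ is itself a basis element, i.e.\ $-k\ge -1$; combined with $k\ge -1$ (needed so that $z_2^k\in A^2(\Ht)$) this leaves $k\in\{-1,0,1\}$. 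Hence $\Fo$ annihilates every basis monomial except $z_2^{-1},1,z_2$, with $\Fo(z_2^{-1})=c_{-1}z_2$, $\Fo(1)=1$, $\Fo(z_2)=c_1z_2^{-1}$ for nonzero constants (ratios of $\mathrm{Vol}(\Ht)$ to monomial norms), so $\Fo$ has rank three with range $\mathrm{span}\{1,z_2,z_2^{-1}\}$. Now $1,z_2\in H^{\infty}(\Ht)$, while the same change of variables gives $\int_{\Ht}|z_2|^{-p}\,dV=\pi\int_{\Ud^*}|w_2|^{2-p}\,dV(w_2)<\infty$ exactly for $p<4$; thus $\mathrm{span}\{1,z_2,z_2^{-1}\}\subset A^p(\Ht)$ for every $p\in[2,4)$, and a finite–rank operator whose range lies in $A^p(\Ht)$ is automatically bounded from $A^2(\Ht)$ into $A^p(\Ht)$. (For $p\ge 4$ one has $\Fo(z_2)=c_1z_2^{-1}\notin A^p(\Ht)$, so the range $[2,4)$ is sharp.)

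For the assertion about $\Bp$: as $\Bp$ is the orthogonal projection of $L^2(\Ht)$ onto $A^2(\Ht)$, its image is exactly $A^2(\Ht)$, so it suffices to exhibit, for each $p>2$, an element of $A^2(\Ht)\setminus A^p(\Ht)$. I would take $h_p(z)=z_2^{-1}(1-z_1/z_2)^{-2/p}$, which corresponds under $\Phi$ to $(1-w_1)^{-2/p}$ on $\Ud\times\Ud^*$: then $\|h_p\|_{L^2(\Ht)}^2=\pi\,\|(1-w_1)^{-2/p}\|_{A^2(\Ud)}^2<\infty$ because $4/p<2$, while $\|h_p\|_{L^p(\Ht)}^p=\big(\int_{\Ud}|1-w_1|^{-2}\,dV\big)\big(\int_{\Ud^*}|w_2|^{2-p}\,dV\big)=+\infty$ since the first factor already diverges. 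Since $\Bp h_p=h_p$, this shows $\Bp$ does not map $L^2(\Ht)$ into $A^p(\Ht)$ for any $p>2$. The one step that is not pure bookkeeping is the rank computation, and its mechanism is the circular symmetry of $\Ht$, which diagonalizes $\Fo$ and isolates $z_2^{-1}$ as the single generator responsible for the threshold $p=4$.
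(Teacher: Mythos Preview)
Your approach is essentially the paper's: both diagonalize $\Fo$ on the monomial orthogonal basis and use the rotational invariance of $\Ht$ to see that $\Fo$ has rank three with range $\mathrm{span}\{z_2^{-1},1,z_2\}$, then check that $z_2^{-1}\in A^p(\Ht)$ exactly for $p<4$. The differences are in presentation and in strength. The paper works in the equivalent one--variable weighted model $A^2(\Ud^*,|z|^2)$ rather than on $\Ht$ directly, and instead of invoking equivalence of norms on a finite--dimensional range it bounds the three surviving Laurent coefficients $a_{-1},a_0,a_1$ against $\int_{\Ud^*}|f(z)|\,|z|^2\,dA(z)$ via the Cauchy integral formula, which yields the sharper estimate $\|\Fo f\|_{L^p}\le C_p\|f\|_{L^1}$; your finite--rank argument gives only $\|\Fo f\|_{L^p}\le C_p\|f\|_{L^2}$, which is enough for the proposition as stated but misses the $L^1$ bound used later for $\s$. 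For the $\Bp$ counterexample the paper takes the simpler function $(1-z_2)^{-2/p}$; your $h_p$ also works. One phrasing to tighten: ``a finite--rank operator whose range lies in $A^p$ is automatically bounded into $A^p$'' is not true in general---what makes it work here is that you already know $\Fo$ is a contraction on $A^2$, so the composition $A^2\xrightarrow{\Fo}\mathrm{Range}(\Fo)\hookrightarrow A^p$ is bounded because the inclusion is continuous on a finite--dimensional subspace.
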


\begin{proposition}
On the exponential Hartogs triangle $\Ht_{\infty}$, the Friedrichs operator
\[
\Fo:A^2(\Ht_{\infty})\to A^p(\Ht_{\infty})
\]
is bounded for any $p\in[2,\infty)$, but not for $p=\infty$. 

\end{proposition}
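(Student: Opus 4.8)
The plan is to use the full Reinhardt symmetry of $\Ht_\infty$ to reduce everything to explicit one-variable estimates on the monomial basis. In polar coordinates $z_j=r_je^{i\theta_j}$ the domain reads $\{0<r_1<e^{-1/r_2},\ 0<r_2<1\}$, so $\{z_1^jz_2^k:j\ge 0,\ k\in\Z\}$ is a complete orthogonal system for $A^2(\Ht_\infty)$ with squared norms
\[
d_{j,k}:=\|z_1^jz_2^k\|_{L^2}^2=\frac{(2\pi)^2}{2j+2}\int_0^1 e^{-(2j+2)/r_2}\,r_2^{2k+1}\,dr_2,
\]
all finite (the exponential factor controls the pinching at $r_2=0$ for every $k\in\Z$). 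Writing $d_m:=d_{0,m}$, a one-line angular integration gives $\Fo(z_1^jz_2^k)=\Bp(\bar z_1^j\bar z_2^k)=0$ as soon as $j\ge 1$, while $\Fo(z_2^k)=(d_0/d_{-k})\,z_2^{-k}$; hence for $g=\sum_{j,k}a_{j,k}z_1^jz_2^k\in A^2(\Ht_\infty)$ one gets $\Fo(g)=d_0\sum_{k\in\Z}(a_{0,k}/d_{-k})\,z_2^{-k}$, together with the a priori bound $\sum_k|a_{0,k}|^2d_k\le\|g\|_{L^2}^2$.

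For $p=\infty$ the claim is then immediate: take $g=z_2$, which lies in $A^2(\Ht_\infty)\cap H^\infty(\Ht_\infty)$; then $\Fo(g)=(d_0/d_{-1})\,z_2^{-1}$, and since $\Ht_\infty$ contains points with $z_2$ arbitrarily close to the origin, $z_2^{-1}$ is unbounded there. Thus $\Fo$ does not even map $A^2(\Ht_\infty)$ into $A^\infty(\Ht_\infty)$, a fortiori not boundedly.

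For $2\le p<\infty$ I would bound $\|\Fo(g)\|_{L^p}$ by the triangle inequality and then Cauchy--Schwarz against the above a priori estimate:
\[
\|\Fo(g)\|_{L^p}\le d_0\sum_k\frac{|a_{0,k}|}{d_{-k}}\|z_2^{-k}\|_{L^p}\le d_0\,\|g\|_{L^2}\,S_p^{1/2},\qquad S_p:=\sum_{m\in\Z}\frac{\|z_2^m\|_{L^p}^2}{d_m^2\,d_{-m}},
\]
where $\|z_2^m\|_{L^p}^p=(2\pi)^2\int_0^1\tfrac12 e^{-2/r_2}r_2^{pm+1}\,dr_2$. Everything then comes down to proving $S_p<\infty$ for each fixed finite $p$, and this is the technical core. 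I would obtain the two-sided tail asymptotics by Laplace's method: the substitution $r_2=e^{-t}$ gives $d_m\asymp m^{-1}$ and $\|z_2^m\|_{L^p}^p\asymp m^{-1}$ as $m\to+\infty$, while $u=1/r_2$ gives, as $\ell\to+\infty$, $d_{-\ell}\asymp\Gamma(2\ell)\,2^{-2\ell}$ and $\|z_2^{-\ell}\|_{L^p}^p\asymp\Gamma(p\ell)\,2^{-p\ell}$, each up to polynomial-in-$\ell$ factors. The $m\to+\infty$ tail of $S_p$ converges trivially, since $d_{-m}$ in the denominator is factorially large while every other factor is polynomial. The $m=-\ell\to-\infty$ tail is the delicate one: there the summand is comparable, up to polynomial factors, to $2^{2\ell}\,\Gamma(p\ell)^{2/p}/\Gamma(2\ell)^2$, and Stirling gives $\Gamma(p\ell)^{2/p}/\Gamma(2\ell)^2\asymp(pe/4\ell)^{2\ell}$, so the summand is $\asymp\ell^{O(1)}(pe/2\ell)^{2\ell}$, which decays faster than any geometric sequence once $2\ell>pe$. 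Hence $S_p<\infty$ for every $p\in[2,\infty)$, which yields boundedness $\Fo:A^2(\Ht_\infty)\to A^p(\Ht_\infty)$; combined with the failure at $p=\infty$ this is the stated range. The main obstacle is exactly this last comparison --- recognizing that the factorial $\Gamma(2\ell)^2$ coming from $d_m^2d_{-m}$ in the denominator dominates the factorial $\Gamma(p\ell)^{2/p}$ coming from $\|z_2^m\|_{L^p}^2$ for every $p$, however large, which is what makes the gain in integrability unlimited on $\Ht_\infty$ while remaining short of $L^\infty$.
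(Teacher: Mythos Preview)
Your argument is correct and follows the same overall strategy as the paper: exploit the Reinhardt symmetry, compute $\Fo$ explicitly on the monomial basis (only the $z_2^k$ survive), use $g=z_2$ for the $p=\infty$ failure, and reduce boundedness to the convergence of a series governed by the asymptotics of $\int_0^1 r^x e^{-2/r}\,dr$ via Stirling. One small slip: $\Fo$ is conjugate-linear, so the coefficients in $\Fo(g)$ should be $\bar a_{0,k}$ rather than $a_{0,k}$; this is harmless since you pass to absolute values immediately.

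The one substantive difference is in how you control the coefficients. You use Parseval plus Cauchy--Schwarz, which yields $\|\Fo(g)\|_{L^p}\le C_p\|g\|_{L^2}$ and the series $S_p=\sum_m \|z_2^m\|_{L^p}^2/(d_m^2 d_{-m})$. The paper instead bounds each $|a_{(0,-\alpha_2)}|$ by a Cauchy-integral average over circles and then over the full domain, obtaining $|a_{(0,-\alpha_2)}|\lesssim (\tilde I(-\alpha_2+1))^{-1}\|f\|_{L^1}$ and hence the stronger estimate $\|\Fo(g)\|_{L^p}\le C_p\|g\|_{L^1}$. This leads to a slightly different (and a priori harder) series, with only one factor of $\Gamma(2k)$ in the denominator rather than your $\Gamma(2\ell)^2$; both converge by the same Stirling comparison. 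Your route is a bit cleaner and suffices for the stated proposition, while the paper's route buys the extra $L^1\to L^p$ conclusion.
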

Finally, within the same section, we look at the smoothing properties of $\Fo^2:=\Fo\circ\Fo$. It follows from the definition that $\Fo$ is not complex-linear but $\Fo^2$ is. Furthermore, $\Fo^2$ is positive and self-adjoint. The spectral properties of $\Fo^2$ can be used to relate two closed subspaces $A^2(\Omega)$ and $\overline{A^2(\Omega)}$ within $L^2(\Omega)$, see \cite[Section 8.4]{ShapiroBook}. In the context of this note, it turns out that the square of the Friedrichs operator gains more integrability than $\Fo$ itself on two model domains $\Ht$ and $\Ht_{\infty}$.


\section{Gaining Integrability and Differentiability on Smooth Pseudoconvex Domains}
In this section we prove Theorems \ref{lemFriedrichs1}, \ref{mainresult}, and \ref{lowSob}. The first result indicates how $\Fo$ gains integrability under certain assumption on $\Bp$, the second and third ones show that on any smoothly bounded pseudoconvex domain $\Fo$ exhibits regularity on $L^p$ and $W^s$ spaces.

\subsection{Gain in integrability}

We first consider a slightly more general version of Theorem \ref{lemFriedrichs1}.

\begin{theorem}
\label{lemFriedrichs2}
Let $\Omega\subset \cn$ be a smoothly bounded pseudoconvex domain. If for some $p>2$, there exists $r'\in[1,\infty]$ such that $\Bp:L^{p'}(\Omega)\to A^{r'}(\Omega)$ is bounded\footnote{When $r'=\infty$, the target space becomes $H^{\infty}(\D)$.}, where $p'$ is the conjugate exponent of $p$, then $\Fo:A^2(\Omega)\to A^p(\Omega)$ is bounded. Moreover, for $g\in A^2(\Omega)$, we have
\[
\|\Fo(g)\|_{L^p}\le C\|g\|_{L^1}.
\]
\end{theorem}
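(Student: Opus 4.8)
The plan is to exploit duality between $A^p(\Omega)$ and a suitable predual, together with the hypothesis that $\Bp$ maps $L^{p'}(\Omega)$ boundedly into $A^{r'}(\Omega)$. First I would observe that since $\Omega$ is smoothly bounded and pseudoconvex, it satisfies condition $R$, so the Bergman projection is self-adjoint on $L^2(\Omega)$ and preserves $C^\infty(\overline\Omega)$; in particular, for $g\in A^2(\Omega)$ one has $\Fo(g)=\Bp(\overline g)\in A^2(\Omega)$ and the pairing manipulations below are legitimate on the dense subspace $A^\infty(\overline\Omega):=C^\infty(\overline\Omega)\cap\mathcal{O}(\Omega)$. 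To estimate $\|\Fo(g)\|_{L^p}$ I would test against $h\in L^{p'}(\Omega)$ with $\|h\|_{L^{p'}}\le 1$ and write
\[
\langle \Fo(g),h\rangle=\langle \Bp(\overline g),h\rangle=\langle \overline g,\Bp h\rangle=\int_\Omega \overline{g}\,\overline{\Bp h}\;=\;\overline{\int_\Omega g\,\Bp h}.
\]
The key point is that $\Bp h\in A^{r'}(\Omega)$ with $\|\Bp h\|_{L^{r'}}\le C$ by hypothesis, and $g\in A^2(\Omega)\subset A^1(\Omega)$ since $\Omega$ is bounded.

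Next I would bound $\bigl|\int_\Omega g\,\Bp h\bigr|$. When $r'=\infty$ this is immediate: $|\int_\Omega g\,\Bp h|\le \|\Bp h\|_{L^\infty}\|g\|_{L^1}\le C\|g\|_{L^1}$. When $r'<\infty$, one cannot pair $L^1$ against $L^{r'}$ directly, so I would use the sub-mean-value property of holomorphic functions: for $g$ holomorphic and $w\in\Omega$, $|g(w)|$ is controlled by the average of $|g|$ over a ball of radius comparable to $\mathrm{dist}(w,\partial\Omega)$, hence $|g(w)|\le C\,\mathrm{dist}(w,\partial\Omega)^{-2n}\|g\|_{L^1}$. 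Using Hölder with exponents $r'$ and $r$ (where $1/r+1/r'=1$) against the finite measure $\Omega$ — more precisely, splitting $g=g^{1/r'}\cdot g^{1/r}$ is not available, so instead I would bound $|\int_\Omega g\,\Bp h|\le \|g\|_{L^r}\|\Bp h\|_{L^{r'}}$ and then control $\|g\|_{L^r}$ by $\|g\|_{L^1}$. The latter reduction is exactly the content of the local sub-mean-value estimate: since $g$ is holomorphic, $\|g\|_{L^r(\Omega)}^r=\int_\Omega|g|^r\le \|g\|_{L^\infty(K)}^{r-1}\|g\|_{L^1}+(\text{tail})$, and iterating or using the known interpolation-free bound $\|g\|_{A^r}\le C\|g\|_{A^1}$ valid for $r<\infty$ on bounded domains via the reproducing kernel, one gets $\|g\|_{L^r}\le C\|g\|_{L^1}$ with $C=C(\Omega,r)$. (Alternatively, and more cleanly, one interpolates: the Bergman kernel gives $g(w)=\int K(w,\zeta)g(\zeta)\,dV(\zeta)$, so $|g(w)|\le \|K(w,\cdot)\|_{L^\infty}\|g\|_{L^1}$, which is finite for each fixed $w$ but blows up near the boundary; a more careful bound using $\|K(w,\cdot)\|_{L^{r'}}$ being integrable in $w$ for $r'$ close to $1$, i.e. $r$ large, is not needed here since $r\in[p,\infty)$ is finite.)

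Putting these together gives $|\langle\Fo(g),h\rangle|\le C\|g\|_{L^1}$ uniformly over $\|h\|_{L^{p'}}\le 1$, and taking the supremum yields $\|\Fo(g)\|_{L^p}\le C\|g\|_{L^1}$ on the dense subspace $A^\infty(\overline\Omega)$; a standard density and limiting argument (using that $A^\infty(\overline\Omega)$ is dense in $A^2(\Omega)$, again by condition $R$, and that $\|\cdot\|_{L^1}\le C\|\cdot\|_{L^2}$ on the bounded domain $\Omega$) extends the estimate to all of $A^2(\Omega)$ and shows $\Fo(g)\in A^p(\Omega)$. Finally, Theorem \ref{lemFriedrichs1} follows as a special case: if $\Bp:L^r(\Omega)\to A^p(\Omega)$ is bounded for some $r\in[p,\infty)$, then by self-adjointness of $\Bp$ its adjoint $\Bp:L^{p'}(\Omega)\to A^{r'}(\Omega)$ is bounded (with $r'$ the conjugate of $r$, so $r'\in(1,p']$), which is precisely the hypothesis of Theorem \ref{lemFriedrichs2}. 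The main obstacle I anticipate is making the reduction $\|g\|_{L^r}\lesssim\|g\|_{L^1}$ fully rigorous and uniform — this is where the sub-mean-value property and the geometry of $\Omega$ near the boundary enter, and one must check that the constant depends only on $\Omega$ and $r$, not on $g$; everything else is formal duality plus condition $R$.
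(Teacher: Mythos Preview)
Your proposal contains two genuine gaps. First, the assertion that every smoothly bounded pseudoconvex domain satisfies condition~$R$ is a famous open problem, not a theorem; the paper is careful not to assume it, and instead uses Catlin's result that $A^{\infty}(\Omega)=\mathcal{O}(\Omega)\cap C^{\infty}(\overline\Omega)$ is dense in $A^2(\Omega)$ for the needed approximation step. Second, and more seriously, the inequality $\|g\|_{A^r(\Omega)}\le C\|g\|_{A^1(\Omega)}$ that you rely on when $r'<\infty$ is simply false. Already on the unit disk, $g_\alpha(z)=(1-z)^{-\alpha}$ belongs to $A^p(\mathbb{D})$ if and only if $p\alpha<2$, so $g_{3/2}\in A^1(\mathbb{D})\setminus A^2(\mathbb{D})$. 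No amount of sub-mean-value iteration can manufacture such an embedding, and you correctly flag this as the main obstacle; it is in fact fatal to the approach as written.

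The paper circumvents this by applying the pointwise sub-mean-value bound to the \emph{other} factor. After writing $\langle \Fo(g),h\rangle=\langle 1,g\,\Bp(h)\rangle$, it invokes Bell's lemma to replace the constant $1$ by $\Phi^s(1)\in W_0^s(\Omega)$ with $\Bp\Phi^s(1)=1$; choosing $s$ large forces $|\Phi^s(1)(z)|\le M\,\delta(z)^{2n}$. Meanwhile the sub-mean-value inequality applied to the holomorphic function $\Bp(h)$ yields $|\Bp(h)(z)|\le C\,\delta(z)^{-2n}\|\Bp(h)\|_{L^{r'}}\le C\,\delta(z)^{-2n}\|h\|_{L^{p'}}$. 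The two powers of $\delta$ cancel exactly, and one is left with
\[
|\langle \Phi^s(1),\,g\,\Bp(h)\rangle|\le C\int_\Omega |g|\,dV=C\|g\|_{L^1},
\]
with no need to upgrade the integrability of $g$. Your duality setup and the reduction of Theorem~\ref{lemFriedrichs1} to Theorem~\ref{lemFriedrichs2} via self-adjointness of $\Bp$ are correct and match the paper; what is missing is precisely this Bell-operator trick that trades the constant $1$ for a function vanishing to high order at $\partial\Omega$.
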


\begin{proof}
For $g\in A^2(\Omega)$, by duality of $L^p$ spaces, we have
\begin{equation*}
\|\Fo(g)\|_{L^p}=\sup_{\|h\|_{L^{p'}}=1}\left|\langle \Fo(g),h\rangle\right|=\sup_{\|h\|_{L^{p'}}=1}\left|\langle \bar g,\Bp(h)\rangle\right|=\sup_{\|h\|_{L^{p'}}=1}\left|\langle 1,g\Bp(h)\rangle\right|.
\end{equation*}
By Bell's lemma \cite[Lemma 2]{Bell81a}, there is bounded linear operator $\Phi^s:HW^s(\Omega)\to W_0^s(\Omega)$ such that $\Bp\Phi^s=Id$, where $HW^s(\Omega)=W^s(\Omega)\cap\mathcal{O}(\Omega)$. So we have
\[
\|\Fo(g)\|_{L^p}=\sup_{\|h\|_{L^{p'}}=1}\left|\langle \Bp\Phi^s(1),g\Bp(h)\rangle\right|
\]
for some $s$ to be determined later.

Since $\Omega$ is bounded smooth pseudoconvex, Catlin's result \cite[Theorem 3.1.4]{Catlin80} implies that $A^{\infty}(\Omega)=\mathcal{O}(\D)\cap C^{\infty}(\overline\D)$ is dense in $HW^s(\Omega)$. In particular, $A^{\infty}(\Omega)$ is dense in $A^2(\Omega)$. So the identity
\[
\langle \Bp\Phi^s(1),gf\rangle=\langle \Phi^s(1),\Bp(gf)\rangle=\langle \Phi^s(1),gf\rangle
\]
holds for all $f\in A^{\infty}(\Omega)$, and hence for all $f\in A^2(\Omega)$. Note that $L^{p'}(\Omega)\cap L^2(\Omega)$ is dense in $L^{p'}(\Omega)$. If $h\in L^2(\Omega)$, then $\Bp(h)\in A^2(\Omega)$. So we see that
\begin{align*}
\|\Fo(g)\|_{L^p}
&=\sup_{\|h\|_{L^{p'}}=1}\left|\langle \Bp\Phi^s(1),g\Bp(h)\rangle\right|\\
&=\sup_{\|h\|_{L^{p'}}=1,\, h\in L^{p'}(\Omega)\cap L^2(\Omega)}\left|\langle \Bp\Phi^s(1),g\Bp(h)\rangle\right|\\
&=\sup_{\|h\|_{L^{p'}}=1,\, h\in L^{p'}(\Omega)\cap L^2(\Omega)}\left|\langle \Phi^s(1),g\Bp(h)\rangle\right|\\
&=\sup_{\|h\|_{L^{p'}}=1}\left|\langle \Phi^s(1),g\Bp(h)\rangle\right|.
\end{align*}

By Sobolev embedding, we can pick $s$ sufficiently large, so that on $\Omega$ we have
\[
|\Phi^s(1)|\le M\cdot \delta^{2n}
\]
for some constant $M>0$, where $\delta$ is the distance function to the boundary $\partial\Omega$. Note that $\Bp(h)$ is holomorphic, by the mean-value theorem and H\"older inequality we have
\[
|\Bp(h)(z)|\le\frac{1}{|B(z;\delta(z))|}\int_{\Omega}|\Bp(h)|\le\frac{C|\Omega|^{1/r}}{\delta(z)^{2n}}\|\Bp(h)\|_{L^{r'}}\le\frac{C}{\delta(z)^{2n}}\|h\|_{L^{p'}},
\]
where $B(z;\delta(z))$ is the ball centered at $z$ of radius $\delta(z)$.

Combining all these estimates, we see that
\begin{align*}
\|\Fo(g)\|_{L^p}
&=\sup_{\|h\|_{L^{p'}}=1}\left|\langle \Phi^s(1),g\Bp(h)\rangle\right|\\
&\le\sup_{\|h\|_{L^{p'}}=1}\int_{\Omega}|\Phi^s(1)(z)g(z)\Bp(h)(z)|\,dV(z)\\
&\le\int_{\Omega}M\cdot\delta(z)^{2n}\cdot|g(z)|\cdot\frac{C}{\delta(z)^{2n}}\,dV(z)\\
&=C\|g\|_{L^1},
\end{align*}
which completes the proof.
\end{proof}

\begin{proof}[Proof of Theorem \ref{lemFriedrichs1}.]
If $\Bp:L^r(\Omega)\to A^{p}(\Omega)$ is bounded for some $r\in[p,\infty)$, then by duality we have for $f\in L^{p'}$
\[
\|\Bp(f)\|_{L^{r'}}=\sup_{\|h\|_{L^r}=1}|\langle\Bp(f),h\rangle|,
\]
where $r'$ and $p'$ are the conjugate exponents of $r$ and $p$ respectively. Since $\Bp$ is self-adjoint, we have
\[
\sup_{\|h\|_{L^r}=1}|\langle\Bp(f),h\rangle|=\sup_{\|h\|_{L^r}=1}|\langle f,\Bp(h)\rangle|.
\]
By H\"older inequality, we see that
\begin{align*}
\sup_{\|h\|_{L^r}=1}|\langle f,\Bp(h)\rangle|
&\le\sup_{\|h\|_{L^r}=1}\|f\|_{L^{p'}}\|\Bp(h)\|_{L^p}\\
&\le C\|f\|_{L^{p'}},
\end{align*}
which implies that $\Bp:L^{p'}(\Omega)\to A^{r'}(\Omega)$ is bounded. Now the conclusion follows from Theorem \ref{lemFriedrichs2}.
\end{proof}

Next we prove a partial low-level $L^p$ regularity result for the Bergman projection $\Bp$.

\begin{lemma}
\label{lemBergman}
Let $r>2$, $\Omega\subset \cn$ be a smoothly bounded pseudoconvex domain, and $\eta$ be the Diedrich-Fornaess exponent of $\Omega$.
\begin{enumerate}
\item If $r\ge\frac{2}{1-\eta}$, then $\Bp:L^r(\Omega)\to A^p(\Omega)$ is bounded for any $p\in[2,\frac{4n}{2n-\eta})$.
\item If $\frac{2}{1-\eta}>r>2$, then $\Bp:L^r(\Omega)\to A^p(\Omega)$ is bounded for any $p\in[2,\frac{4nr}{(2n-1)r+2})$.
\end{enumerate}
\end{lemma}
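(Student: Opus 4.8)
The plan is to push the $L^r$ hypothesis through the weighted $L^2$ theory of the Bergman projection. Throughout, $\delta$ denotes the distance to $\partial\Omega$, comparable to $-\rho$ for a smooth defining function $\rho$. I would use two external facts. First, by \cite{BerChar}, the Bergman projection $\Bp$ is bounded on $L^2(\Omega,\delta^{-\gamma}\,dV)$ for every $\gamma\in[0,\eta)$ (this is their main weighted estimate; it is equivalent to their bound $\Bp:W^{s}(\Omega)\to W^{s}(\Omega)$ for $0\le s<\eta/2$). Second, a holomorphic function $f$ on a smoothly bounded domain obeys the Hardy--Sobolev type estimate
\[
\|f\|_{W^{s}(\Omega)}\ \le\ C\,\Big(\int_\Omega|f|^2\,\delta^{-2s}\,dV\Big)^{1/2},\qquad 0\le s<\tfrac12,
\]
i.e.\ the weighted Bergman space $A^2(\Omega,\delta^{-2s})$ embeds into $W^s(\Omega)$; together with the Sobolev embedding $W^{s}(\Omega)\hookrightarrow L^{q}(\Omega)$ for $\tfrac1q=\tfrac12-\tfrac{s}{2n}$, this gives $A^2(\Omega,\delta^{-2s})\hookrightarrow L^{q}(\Omega)$ with $q=\tfrac{4n}{2n-2s}$.

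Next I would convert integrability into a weight. For $h\in L^r(\Omega)$ with $r>2$ and any $\gamma$ with $0\le\gamma<1-\tfrac2r$, Hölder's inequality with exponents $\tfrac r2$ and $\tfrac{r}{r-2}$ gives
\[
\int_\Omega|h|^2\,\delta^{-\gamma}\,dV\ \le\ \|h\|_{L^r}^2\,\Big(\int_\Omega\delta^{-\gamma r/(r-2)}\,dV\Big)^{(r-2)/r}\ \le\ C_\gamma\,\|h\|_{L^r}^2<\infty,
\]
the last integral being finite since $\gamma r/(r-2)<1$ and $\partial\Omega$ is smooth; hence $h\in L^2(\Omega,\delta^{-\gamma})$. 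Put $\theta:=\min\{\eta,\,1-\tfrac2r\}$. For every $\gamma\in[0,\theta)$ the estimate from \cite{BerChar} then gives $\Bp h\in L^2(\Omega,\delta^{-\gamma})$ with $\|\Bp h\|_{L^2(\delta^{-\gamma})}\le C\|h\|_{L^r}$, and since $\Bp h$ is holomorphic the first paragraph (with $s=\gamma/2$) upgrades this to $\Bp h\in L^{q}(\Omega)$ with $q=\tfrac{4n}{2n-\gamma}$ and $\|\Bp h\|_{L^q}\le C\|h\|_{L^r}$.

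It remains to optimize the choice of $\gamma$. As $\gamma$ runs over $[0,\theta)$ the exponent $q=\tfrac{4n}{2n-\gamma}$ runs over $[2,\tfrac{4n}{2n-\theta})$, so $\Bp:L^r(\Omega)\to A^p(\Omega)$ is bounded for every $p$ in that interval; the endpoint $p=2$ is covered directly by the $L^2$-boundedness of $\Bp$ together with $\|\cdot\|_{L^2}\le C\|\cdot\|_{L^r}$ on the bounded domain $\Omega$. If $r\ge\tfrac2{1-\eta}$ then $1-\tfrac2r\ge\eta$, so $\theta=\eta$ and $\tfrac{4n}{2n-\theta}=\tfrac{4n}{2n-\eta}$, which is part (1). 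If $2<r<\tfrac2{1-\eta}$ then $\theta=1-\tfrac2r$ and
\[
\frac{4n}{2n-\theta}=\frac{4n}{2n-1+\tfrac2r}=\frac{4nr}{(2n-1)r+2},
\]
which is part (2).

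Of these ingredients, Hölder's inequality and the Sobolev embedding are routine and the bound of \cite{BerChar} is quoted; the step where I would expect to spend real effort, and would want to pin down carefully, is the Hardy--Sobolev embedding $A^2(\Omega,\delta^{-2s})\hookrightarrow W^s(\Omega)$ for holomorphic functions. On the ball it is transparent: for $f=\sum_\alpha a_\alpha z^\alpha$ both norms are, up to constants, equal to the weighted $\ell^2$ quantity $\sum_\alpha(1+|\alpha|)^{2s}\,|a_\alpha|^2\,\|z^\alpha\|_{L^2}^2$, so they are comparable. On a general smoothly bounded domain I would obtain it from a partition of unity, a boundary-flattening change of variables, and the corresponding estimate on a model half-space (or quote a suitable reference from the theory of weighted Bergman and fractional Sobolev spaces of holomorphic functions).
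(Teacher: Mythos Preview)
Your argument is correct and follows essentially the same route as the paper: H\"older to pass from $L^r$ to $L^2(\delta^{-\gamma})$, the weighted $L^2$ estimate of \cite{BerChar} for $\gamma<\eta$, the embedding $A^2(\delta^{-2s})\hookrightarrow W^s$ for holomorphic functions, and Sobolev embedding, followed by the same case split on $\min\{\eta,1-2/r\}$. The one place you flag as needing effort---the Hardy--Sobolev step $\|f\|_{W^s}\le C\|f\|_{L^2(\delta^{-2s})}$ for holomorphic $f$ and $0\le s<1/2$---the paper simply quotes from \cite{JerKen} and \cite{Det}, so no additional work is required.
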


\begin{proof}
For $p\ge2$ and $f\in L^{r}(\Omega)$, by the Sobolev embedding theorem we have
\[
\|\Bp(f)\|_{L^{p}}\le\|\Bp(f)\|_{W^s},
\]
where $1/p=1/2-s/(2n)$ or equivalently $s=n(1-\frac{2}{p})$. $\Bp(f)$ is a holomorphic function, so for $0\le s<1/2$ we have
\[
\|\Bp(f)\|_{W^s}\le\|\Bp(f)\|_{L^2(\delta^{-2s})}
\]
by \cite[Theorem 4.2]{JerKen} and \cite[Lemma 1]{Det}. Indeed, these two norms are comparable to each other \cite{BerChar}. By the weighted estimate in \cite{BerChar} the paragraph above Theorem 2.4, we have
\[
\|\Bp(f)\|_{L^2(\delta^{-2s})}\le C\|f\|_{L^2(\delta^{-2s})}=C\left(\int_{\Omega}|f|^2\delta^{-2s}\right)^{1/2}
\]
for $2s<\eta$. Since $\eta<1$, two requirements $p\ge2$ and $2s<\eta$ indeed guarantee that $0\le s<1/2$.
By H\"older inequality, we see that
\[
\left(\int_{\Omega}|f|^2\delta^{-2s}\right)^{1/2}\le\left(\int_{\Omega}|f|^{r}\right)^{1/r}\left(\int_{\Omega}\delta^{-2s\cdot\frac{r}{r-2}}\right)^{\frac{r-2}{2r}}=C\|f\|_{L^{r}},
\]
provided $-2s\cdot\frac{r}{r-2}>-1$ or equivalently $2s<\frac{r-2}{r}$.

In summary, for $p\ge2$, $s=n(1-\frac{2}{p})$, and $f\in L^{r}(\Omega)$, if $2s<\eta$ and $2s<\frac{r-2}{r}$, then we have
\[
\|\Bp(f)\|_{L^{p}}\le C\|f\|_{L^{r}}.
\]
\begin{enumerate}
\item When $\frac{r-2}{r}\ge\eta$, i.e. $r\ge\frac{2}{1-\eta}$, the requirement $\eta>2s=2n(1-\frac{2}{p})$ gives $p<\frac{4n}{2n-\eta}$.
\item When $\frac{r-2}{r}<\eta$, i.e. $\frac{2}{1-\eta}>r$, the requirement $\frac{r-2}{r}>2s=2n(1-\frac{2}{p})$ gives $p<\frac{4nr}{(2n-1)r+2}$.
\end{enumerate}
\end{proof}

\begin{remark}

This result indicates that even though $\Bp$ may not preserve low-level $L^p$ spaces, it still doesn't degenerate as on non-smooth domains. In particular, if we choose $\e>0$ small enough and set $r=2+\e$, then the upper limit for the target space becomes $2+\e\frac{2}{4n+2n\e-\e}$. Therefore, for small enough $\e$ and $\e'<\e\left(\frac{2}{4n+2n\e-\e}\right)$ we show that $\Bp$ is bounded from $L^{2+\e}(\D)$ to $L^{2+\e'}(\D)$.
\end{remark}

With Lemma \ref{lemBergman} and Theorem \ref{lemFriedrichs1}, now we are ready to prove Theorem \ref{mainresult}. 

\begin{proof}[Proof of Theorem \ref{mainresult}]
By Lemma \ref{lemBergman}, we see that $\Bp:L^{r}(\Omega)\to A^{p}(\Omega)$ is bounded for any $p\in[2,\frac{4n}{2n-\eta})$, where $r\ge\frac{2}{1-\eta}$ and $\eta$ is the Diedrich-Fornaess exponent of $\Omega$. Now we apply Theorem \ref{lemFriedrichs1} and conclude that
\[
\Fo:A^2(\Omega)\to A^{p}(\Omega)
\]
 is bounded for any $p\in[2, 2+\e)$, where $\e=\frac{2\eta}{2n-\eta}$ and for $g\in A^2(\Omega)$
\[
\|\Fo(g)\|_{L^p}\le C\|g\|_{L^1}.
\]
\end{proof}

If condition R holds, \cite[Corollary 1.12]{HerMcNStr} shows that $\Fo:A^2(\Omega)\to C^{\infty}(\bar\Omega)$. Here we conclude an analogous statement, where condition R is replaced by an assumption on $L^p$ estimates.

\begin{corollary}
Let $\Omega\subset \cn$ be a smoothly bounded pseudoconvex domain. If for each $p\in[2,\infty)$, there exists $r\in(1,\infty)$ such that $\Bp:L^{r}(\Omega)\to A^p(\Omega)$ is bounded and the operator norm is uniform in $p$, then $\Fo:A^2(\Omega)\to H^{\infty}(\Omega)$.
\end{corollary}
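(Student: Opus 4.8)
The plan is to revisit the constant produced in the proof of Theorem \ref{lemFriedrichs2} (equivalently Theorem \ref{lemFriedrichs1}), show that under the uniformity hypothesis it can be chosen independent of $p$, and then let $p\to\infty$, using the elementary fact that a function on a finite measure space whose $L^p$-norms are uniformly bounded lies in $L^\infty$. First I would run the duality step from the proof of Theorem \ref{lemFriedrichs1}: fix $p\in(2,\infty)$ and let $r=r(p)\in(1,\infty)$ be as in the hypothesis, so that $\|\Bp\|_{L^r(\Omega)\to A^p(\Omega)}\le N$ with $N$ independent of $p$. Since $\Bp$ is self-adjoint, the same computation as in that proof gives $\Bp\colon L^{p'}(\Omega)\to A^{r'}(\Omega)$ bounded with $\|\Bp\|_{L^{p'}\to A^{r'}}\le N$, where $r'$ is the conjugate exponent of $r$. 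In particular the hypothesis of Theorem \ref{lemFriedrichs2} is met for every $p\in(2,\infty)$.

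Next I would read the proof of Theorem \ref{lemFriedrichs2} while tracking the constant. The operator $\Phi^s$ from Bell's lemma and the Sobolev exponent $s$ (taken large enough that $|\Phi^s(1)|\le M\delta^{2n}$ on $\Omega$ by Sobolev embedding) depend only on $\Omega$ and $n$, hence $M$ is a fixed constant. The mean-value inequality for the holomorphic function $\Bp(h)$ together with H\"older's inequality gives, with $|B(z;\delta(z))|=c_n\delta(z)^{2n}$,
\[
|\Bp(h)(z)|\le\frac{|\Omega|^{1/r}}{|B(z;\delta(z))|}\,\|\Bp(h)\|_{L^{r'}}\le\frac{c_n^{-1}\max(1,|\Omega|)\,N}{\delta(z)^{2n}}\,\|h\|_{L^{p'}},
\]
since $r>1$ forces $|\Omega|^{1/r}\le\max(1,|\Omega|)$. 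Substituting this into the final display of that proof yields $\|\Fo(g)\|_{L^p}\le M\,c_n^{-1}\max(1,|\Omega|)\,N\,\|g\|_{L^1}=:C\|g\|_{L^1}$ with $C$ independent of $p\in(2,\infty)$. (If one prefers to invoke Theorem \ref{lemFriedrichs1} directly, one first replaces $r(p)$ by $\max(r(p),p)$, at the cost of the inclusion norm $|\Omega|^{1/r-1/p}\le\max(1,|\Omega|)$, so that the requirement $r\in[p,\infty)$ holds while the operator norm stays uniformly bounded.)

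Finally I would pass to the limit. By Theorem \ref{lemFriedrichs2}, $\Fo(g)\in A^p(\Omega)$ for every $p<\infty$, and by the previous step $\|\Fo(g)\|_{L^p}\le C\|g\|_{L^1}$ uniformly. On the finite measure space $\Omega$ this forces $\|\Fo(g)\|_{L^\infty}\le C\|g\|_{L^1}$: if $|\Fo(g)|>C\|g\|_{L^1}+\e$ on a set $E$ of positive measure, then $\|\Fo(g)\|_{L^p}\ge(C\|g\|_{L^1}+\e)|E|^{1/p}\to C\|g\|_{L^1}+\e$ as $p\to\infty$, a contradiction. Since $\Fo(g)=\Bp(\overline{g})\in\mathcal{O}(\Omega)$ by definition, we conclude $\Fo(g)\in H^\infty(\Omega)$, and the estimate $\|\Fo(g)\|_{L^\infty}\le C|\Omega|^{1/2}\|g\|_{L^2}$ shows that $\Fo\colon A^2(\Omega)\to H^\infty(\Omega)$ is bounded.

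The one point requiring genuine care — the "hard part" — is verifying that nothing in the proof of Theorem \ref{lemFriedrichs2} depends on $p$ in a way that blows up: one must check that the smoothing parameter $s$ and Bell's operator $\Phi^s$ are chosen once and for all (independently of $p$), and that the factor $|\Omega|^{1/r}$ arising from H\"older's inequality is controlled uniformly over $r\in(1,\infty)$. Once the constant is seen to be uniform in $p$, the duality reduction and the $L^p\to L^\infty$ limit are routine.
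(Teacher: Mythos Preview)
Your proposal is correct and follows essentially the same approach as the paper: track the constant in the proof of Theorem \ref{lemFriedrichs2}, observe that the only $p$-dependent contribution is the factor $|\Omega|^{1/r}$ which is uniformly bounded by $\max\{|\Omega|,1\}$, and then let $p\to\infty$. The paper's proof is just a terse two-line version of what you wrote; your added detail on the duality reduction, the $p$-independence of $\Phi^s$ and $M$, and the elementary $L^p\to L^\infty$ limit argument is all accurate and in the same spirit.
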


\begin{proof}
By checking the proof of Theorem \ref{lemFriedrichs2} and the fact that $|\Omega|^{1/r}\le\max\{|\Omega|,1\}$, for $g\in A^2(\Omega)$ we have
\[
\|\Fo(g)\|_{L^p}\le C\|g\|_{L^1}
\]
for all $p\in[2,\infty)$ with $C>0$ independent of $p$. The conclusion follows by letting $p\to\infty$.
\end{proof}

\subsection{Gain in differentiability}

In this section we prove Theorem \ref{lowSob} as a consequence of Theorem \ref{mainresult} by showing the following lemma.

\begin{lemma}
\label{AptoWs}
Let $\Omega\subset \cn$ be a smoothly bounded pseudoconvex domain and let $\eta$ be the Diedrich-Fornaess exponent of $\Omega$. If $\Fo:A^2(\Omega)\to A^p(\Omega)$ is bounded for some $p<4n/(2n-\eta)$, then $\Fo:A^2(\Omega)\to W^s(\Omega)$ is bounded for any $s<(p-2)/2p$.
\end{lemma}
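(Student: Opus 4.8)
The plan is to combine two ingredients already exploited in the proof of Lemma~\ref{lemBergman}: for a \emph{holomorphic} function the fractional Sobolev norm $\|\cdot\|_{W^s(\Omega)}$ is dominated by the weighted $L^2$ norm $\|\cdot\|_{L^2(\delta^{-2s})}$ when $0\le s<1/2$, and H\"older's inequality lets one pass from an $L^p$ bound to such a weighted $L^2$ bound. Since $\Fo(g)\in\mathcal{O}(\Omega)$ for every $g\in A^2(\Omega)$, this reduces the lemma to an elementary computation with negative powers of the boundary-distance function $\delta$.

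Concretely, I would first recall from \cite[Theorem 4.2]{JerKen} and \cite[Lemma 1]{Det} (see also \cite{BerChar}) that there is a constant $C$ with
\[
\|u\|_{W^s(\Omega)}\le C\,\|u\|_{L^2(\Omega,\delta^{-2s})}\qquad\text{for all }u\in\mathcal{O}(\Omega)\text{ and }0\le s<\tfrac12 .
\]
Next, for $u\in A^p(\Omega)$ with $p>2$, H\"older's inequality with exponents $p/2$ and $p/(p-2)$ gives
\[
\int_{\Omega}|u|^{2}\delta^{-2s}\le\|u\|_{L^p}^{2}\left(\int_{\Omega}\delta^{-\frac{2sp}{p-2}}\right)^{\frac{p-2}{p}} .
\]
Because $\partial\Omega$ is smooth, $\int_{\Omega}\delta^{-a}<\infty$ exactly when $a<1$, so the right-hand side is finite precisely when $\frac{2sp}{p-2}<1$, that is, when $s<\frac{p-2}{2p}$. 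Since $\frac{p-2}{2p}=\frac12-\frac1p<\frac12$, any such $s$ automatically satisfies the constraint $0\le s<\frac12$ needed for the first estimate, so the two inequalities may be chained.

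Putting these together, for $g\in A^2(\Omega)$ and $0\le s<\frac{p-2}{2p}$ one obtains
\[
\|\Fo(g)\|_{W^s(\Omega)}\le C\,\|\Fo(g)\|_{L^2(\Omega,\delta^{-2s})}\le C'\,\|\Fo(g)\|_{L^p(\Omega)}\le C''\,\|g\|_{L^2(\Omega)} ,
\]
the last step being the hypothesis that $\Fo:A^2(\Omega)\to A^p(\Omega)$ is bounded (and, when this hypothesis is furnished by Theorem~\ref{mainresult}, one may even replace $\|g\|_{L^2}$ by $\|g\|_{L^1}$), which proves the lemma. The bound $p<\frac{4n}{2n-\eta}$ plays no role in the argument itself; in the statement it merely records the range in which Theorem~\ref{mainresult} supplies the hypothesis, and combined with $s<\frac{p-2}{2p}=\frac12-\frac1p$ it produces $s<\frac{\eta}{4n}$, which is exactly the conclusion of Theorem~\ref{lowSob}. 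The only slightly delicate point is the integrability threshold $\int_{\Omega}\delta^{-a}<\infty\iff a<1$, which rests on smoothness of $\partial\Omega$ via a tubular-neighborhood computation; everything else is a direct invocation of the cited results together with H\"older's inequality, so I do not anticipate a genuine obstacle.
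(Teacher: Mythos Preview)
Your argument is correct and follows essentially the same route as the paper: the paper also applies \cite[Theorem 4.2]{JerKen} and \cite[Lemma 1]{Det} to bound $\|\Fo(g)\|_{W^s}$ by $\|\Fo(g)\|_{L^2(\delta^{-2s})}$, then uses H\"older with exponents $p/2$ and $p/(p-2)$ together with $\int_\Omega\delta^{-a}<\infty$ for $a<1$ to obtain the stated range $s<(p-2)/2p$. Your observation that the hypothesis $p<4n/(2n-\eta)$ is not actually used in the proof itself is accurate; the paper simply lists it alongside the genuine constraint $2sp/(p-2)<1$.
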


\begin{proof}
For any $g\in A^2(\Omega)$ and $0\le s<1/2$, the holomorphic function $\Fo(g)$ satisfies
\[
\|\Fo(g)\|_{W^s} \le \|\Fo(g)\|_{L^2(\delta^{-2s})}=\left(\int_{\Omega}|\Fo(g)|^2\delta^{-2s}\right)^{1/2}
\]
by \cite[Theorem 4.2]{JerKen} and \cite[Lemma 1]{Det}, where $\delta$ is the distance function to $\partial \Omega$. Then by H\"older inequality, we see that
\[
\left(\int_{\Omega}|\Fo(g)|^2\delta^{-2s}\right)^{1/2}\le\left(\int_{\Omega}|\Fo(g)|^p\right)^{1/p}\cdot\left(\int_{\Omega}\delta^{-2s\cdot p/(p-2)}\right)^{(p-2)/2p}=C\|\Fo(g)\|_{L^p}\le C\|g\|_{L^2},
\]
provided $p<4n/(2n-\eta)$ and $2sp/(p-2)<1$. This shows that 
\[
\|\Fo(g)\|_{W^s}\le C\|g\|_{L^2}
\]
for any $s<(p-2)/2p$.
\end{proof}

\begin{remark}
The converse is also true, for the larger range $s<\eta/2$. Indeed, by the Sobolev embedding theorem we have
\[
\Fo:A^2(\Omega)\to W^s(\Omega)\hookrightarrow L^p(\Omega),
\]
where $1/p=1/2-s/(2n)$. So if $s<\eta/2$, then $p<4n/(2n-\eta)$. 

\end{remark}

\begin{proof}[Proof of Theorem \ref{lowSob}]
By Lemma \ref{AptoWs}, Theorem \ref{mainresult} and the upper bound for $p$ in Lemma \ref{lemBergman} (1), it is easy to see that $\Fo:A^2(\Omega)\to W^s(\Omega)$ is bounded for any $s<\eta/(4n)$. 

\end{proof}

On planar domains, if the boundary is smooth enough then $\Fo$ is compact on the Bergman space \cite{Friedrichs}. Below by using Rellich's lemma, we conclude a similar statement in higher dimensions. See \cite{KraRoc} for stronger conclusions under condition R assumption.

\begin{corollary}
Let $\Omega$ be a bounded smooth pseudoconvex domain in $\cx^n$. Then the Friedrichs operator $\Fo:A^2(\Omega)\to A^2(\Omega)$ is compact.
\end{corollary}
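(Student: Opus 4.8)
The plan is to derive compactness directly from the gain-in-differentiability statement, Theorem~\ref{lowSob}, together with Rellich's lemma. First I would recall that the Friedrichs operator is a well-defined bounded operator $\Fo:A^2(\Omega)\to A^2(\Omega)$: for $g\in A^2(\Omega)$ we have $\overline g\in L^2(\Omega)$, hence $\Fo(g)=\Bp(\overline g)\in A^2(\Omega)$, and $\|\Fo(g)\|_{L^2}=\|\Bp(\overline g)\|_{L^2}\le\|g\|_{L^2}$. By the Diederich--Fornaess theorem every bounded smooth pseudoconvex domain $\Omega\subset\cn$ has a strictly positive Diederich--Fornaess exponent $\eta>0$, so Theorem~\ref{lowSob} actually furnishes a \emph{positive} Sobolev index: fixing any $s$ with $0<s<\eta/(4n)$, the operator $\Fo:A^2(\Omega)\to W^s(\Omega)$ is bounded.

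Next I would factor $\Fo$ as the composition
\[
A^2(\Omega)\xrightarrow{\ \Fo\ }W^s(\Omega)\hookrightarrow L^2(\Omega).
\]
Since $\Omega$ is bounded with smooth (in particular Lipschitz) boundary and $s>0$, Rellich's lemma asserts that the inclusion $W^s(\Omega)\hookrightarrow W^0(\Omega)=L^2(\Omega)$ is compact. The composition of the bounded map $\Fo:A^2(\Omega)\to W^s(\Omega)$ with this compact inclusion is therefore a compact operator from $A^2(\Omega)$ into $L^2(\Omega)$.

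Finally I would remove the ambient space: $A^2(\Omega)$ is a closed subspace of $L^2(\Omega)$, and by definition the range of $\Fo$ is contained in $A^2(\Omega)$; for a bounded operator whose range lies in a closed subspace, compactness into that subspace is equivalent to compactness into the ambient space (the image of a bounded set has compact closure either way). Hence $\Fo:A^2(\Omega)\to A^2(\Omega)$ is compact, which is the claim. I do not expect a real obstacle here: the only point needing attention is that Theorem~\ref{lowSob} supplies a strictly positive Sobolev exponent, which is exactly what $\eta>0$ guarantees; the rest is the standard Rellich argument together with the trivial observation about co-restriction to a closed subspace containing the range.
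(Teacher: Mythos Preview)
Your argument is correct and coincides with the paper's own proof: both invoke Theorem~\ref{lowSob} to land $\Fo$ in $W^s(\Omega)$ for some $s>0$ and then apply the compactness of the inclusion $W^s(\Omega)\hookrightarrow L^2(\Omega)$. The extra remarks you make about $\eta>0$ and co-restriction to the closed subspace $A^2(\Omega)$ are fine elaborations of the same idea.
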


\begin{proof}
This follows from Theorem \ref{lowSob} and the compactness of the inclusion $W^s(\Omega)\hookrightarrow L^2(\Omega)$.
\end{proof}


\section{The Friedrichs operator on $\Ht$ and $\Ht_{\infty}$}

The Bergman projection operator $\Bp$ exhibits interesting $L^p$ mapping properties on the Hartogs triangle
\[\Ht=\{(z_1,z_2)\in\cx^2\,|\,|z_1|<|z_2|<1\},\]
and on an exponential version of it
\[\Ht_{\infty}=\{(z_1,z_2)\in\cx^2\,|\,|z_1|<e^{-\frac{1}{|z_2|}},z_2\in\Ud^*\}.\]  
It turns out that $\Bp$ is bounded on $L^p(\Ht)$ if and only if $p\in(\frac{4}{3}, 4)$ \cite{ChaZey16, Chen14, EdhMcN16} and $\Bp$ is bounded on $L^p(\Ht_{\infty})$ if and only if $p=2$ \cite{ChenZey}. The main reason for these irregularities is the singular points on the boundaries of these domains. However, $\Fo$ and $\s$ still gain integrability on these domains.


\subsection{The Hartogs triangle $\Ht$}
\label{Hartogstriangle}
Since $\Bp$ is bounded on $L^p(\Ht)$ for $p\in(\frac{4}{3}, 4)$, $\Fo$ inherits this boundedness range. However, as on smooth pseudoconvex domains, $\Fo$ satisfies better estimates.

\begin{proposition}
On the Hartogs triangle $\Ht$, the Friedrichs operator
\[
\Fo: A^2(\Ht)\to A^p(\Ht)
\]
is bounded for any $p\in[2,4)$. 
\end{proposition}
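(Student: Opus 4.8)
The plan is to reduce the statement to an explicit computation on the Hartogs triangle using the well-known biholomorphism $\Phi\colon\Ht\to\Ud^*\times\Ud$ given by $\Phi(z_1,z_2)=(z_1/z_2,z_2)$, which transfers Bergman-space questions on $\Ht$ to weighted Bergman-space questions on the product domain. Under this map the Bergman projection on $\Ht$ is conjugate to a weighted Bergman projection on $\Ud^*\times\Ud$ with weight $|w_2|^2$, and the Friedrichs operator $\Fo(g)=\Bp(\bar g)$ transforms accordingly. The strategy is: first write $\Fo$ explicitly on $\Ht$ in terms of the Bergman kernel $K_\Ht$, then pull everything back to $\Ud^*\times\Ud$, where the kernel and the relevant monomial basis are completely explicit.

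First I would expand $g\in A^2(\Ht)$ in the orthonormal monomial basis of $A^2(\Ht)$. Because the domain is a Reinhardt-type domain, the monomials $z_1^{j}z_2^{k}$ that lie in $A^2(\Ht)$ form an orthogonal basis, and one can write down the admissible exponent range and the $L^2$ norms explicitly (these computations are standard and appear in \cite{ChaZey16, Chen14, EdhMcN16}). Applying $\Fo$ to a monomial $z_1^j z_2^k$ means applying $\Bp$ to $\bar z_1^{\,j}\bar z_2^{\,k}$; since $\Bp$ reproduces holomorphic monomials and annihilates genuinely anti-holomorphic directions, $\Fo(z_1^j z_2^k)$ is again supported on the holomorphic monomials, and in fact $\Fo$ acts diagonally (up to a shift) on the monomial basis. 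The key point is that the pairing $\langle \bar z_1^{\,j}\bar z_2^{\,k}, z_1^{a}z_2^{b}\rangle_{\Ht}$ is nonzero only for a single pair $(a,b)$ determined by $(j,k)$, so $\Fo$ maps each basis monomial to a scalar multiple of another basis monomial. This gives a clean diagonal description of $\Fo$.

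Next I would estimate the $A^p(\Ht)$ norm of $\Fo(g)$. Having reduced $\Fo$ to a diagonal operator on monomials with explicitly computable coefficients, I would bound $\|\Fo(g)\|_{L^p(\Ht)}$ using the triangle inequality together with the $L^p$ norms of the image monomials, which are again explicit integrals over $\Ht$ (finite precisely when $p<4$, because of the singularity of $\Ht$ along $z_2=0$; this is exactly where the endpoint $4$ enters). The coefficients produced by $\Fo$ decay fast enough — this is the reflection of the $L^1\to L^p$ smoothing already seen in Theorem \ref{lemFriedrichs1} — to absorb the mild growth of the $L^p$ monomial norms relative to the $L^2$ ones, and to sum to something controlled by $\|g\|_{L^2(\Ht)}$ (in fact by $\|g\|_{L^1(\Ht)}$). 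Alternatively, and perhaps more cleanly, one can invoke Theorem \ref{lemFriedrichs2} directly: since $\Bp\colon L^{p'}(\Ht)\to A^2(\Ht)$ is bounded for $p'\in(4/3,4)$, i.e. for $p\in(4/3,4)$, and for $p\in[2,4)$ the conjugate $p'$ lies in $(4/3,2]\subset(4/3,4)$, the hypothesis of Theorem \ref{lemFriedrichs2} is met with $r'=2$, and boundedness of $\Fo\colon A^2(\Ht)\to A^p(\Ht)$ follows immediately — but one must check that the proof of Theorem \ref{lemFriedrichs2} goes through on $\Ht$ despite the lack of smoothness, since it used Bell's lemma and Catlin's density theorem, both stated for smooth pseudoconvex domains.

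The main obstacle is precisely this last point: the abstract argument of Theorem \ref{lemFriedrichs2} relies on smoothness of the boundary (Bell's projection operator $\Phi^s$, Catlin's approximation of $HW^s$ by $A^\infty$), which fails at the singular point of $\Ht$. So I expect the honest route is the explicit one via the biholomorphism to $\Ud^*\times\Ud$ and the monomial basis, where no regularity of the boundary is needed and everything is reduced to convergence of explicit series of beta-function-type integrals; the cutoff $p<4$ will emerge from the requirement that these integrals over $\Ht$ (equivalently, weighted integrals over $\Ud^*\times\Ud$) converge near $z_2=0$. Matching the counting of monomials so that the resulting series converges for all $p<4$ — rather than some smaller range — will be the delicate bookkeeping step.
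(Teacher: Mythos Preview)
Your approach via the monomial basis is precisely the paper's, but you miss the one observation that makes the computation trivial: on $\Ht$ the Friedrichs operator has \emph{finite rank}. The paper first reduces (via \cite{ChaZey16,Chen14}) to the equivalent one-variable weighted space $A^2(\Ud^*,|z|^2)$; writing $f=\sum_{k\ge -1}a_kz^k$, a direct computation gives
\[
\Fo(f)(z)=\frac{\bar a_1}{2}\,z^{-1}+\bar a_0+\frac{3\bar a_{-1}}{2}\,z,
\]
because $\int_{\Ud^*}\bar w^{\,n+k}|w|^2\,dA(w)=0$ unless $k=-n$, and the constraints $k,n\ge -1$ force $n\in\{-1,0,1\}$. (In your two-variable picture the same thing happens: $\Fo(z_1^jz_2^k)=0$ unless $j=0$ and $k\in\{-1,0,1\}$.) So there is no infinite series to sum, no ``coefficients decaying fast enough'' to verify, and no ``delicate bookkeeping'': one simply bounds $|a_{-1}|,|a_0|,|a_1|$ by $\|f\|_{L^1}$ using the Cauchy integral formula, and the restriction $p<4$ enters only because $z^{-1}\in L^p(\Ud^*,|z|^2)$ precisely when $p<4$.

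You are right that the abstract route through Theorem~\ref{lemFriedrichs2} is blocked by the singularity of $\partial\Ht$; the paper does not attempt it. Your explicit plan would eventually uncover the finite-rank structure, but framing the remaining work as a series-convergence problem misreads the situation---the image of $\Fo$ is three-dimensional.
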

We note that the Bergman projection $\Bp$ does not map $L^2(\Ht)$ into $A^p(\Ht)$ for any $p>2$. This can be easily seen by the example $f(z)=\frac{1}{(1-z_2)^{2/p}}$ for $z\in\Ht$ and given $p>2$.

\begin{proof}

We start working on the equivalent weighted space $A^2(\Ud^*,|z|^2)$, see  \cite{ChaZey16, Chen14}. The Bergman kernel is given by
\[
B(z,w)=\sum_{k\ge-1}\frac{k+2}{\pi}z^k\bar w^k
\]
for $z,w\in\Ud^*$. Given $f\in A^2(\Ud^*,|z|^2)$, we write
\[
f(w)=\sum_{k\ge-1}a_kw^k
\]
for $w\in\Ud^*$. A direct computation shows
\begin{equation}
\label{F(f)(z)}
\begin{split}
\Fo(f)(z)=\Bp(\bar f)(z)
&=\int_{\Ud^*}\Big(\sum_{k\ge-1}\frac{k+2}{\pi}z^k\bar w^k\Big)\Big(\sum_{n\ge-1}\bar a_n\bar w^n\Big)|w|^2\,dA(w)\\
&=\frac{\bar a_1}{2}z^{-1}+\bar a_0+\frac{3\bar a_{-1}}{2}z
\end{split}
\end{equation}
for $z\in\Ud^*$. When $2\le p<4$, we have
\begin{align*}
\left(\int_{\Ud^*}|\Fo(f)(z)|^p|z|^2\,dA(z)\right)^{1/p}
&\le\frac{|a_1|}{2}\left(\int_{\Ud^*}|z^{-1}|^p|z|^2\,dA(z)\right)^{1/p}+|a_0|\left(\int_{\Ud^*}|z|^2\,dA(z)\right)^{1/p}\\&+\frac{3}{2}|a_{-1}|\left(\int_{\Ud^*}|z|^p|z|^2\,dA(z)\right)^{1/p}=\\
&=\frac{|a_1|}{2}\Big(\frac{1}{4-p}\cdot 2\pi\Big)^{1/p}+|a_0|\Big(\frac{1}{4}\cdot 2\pi\Big)^{1/p}+\frac{3}{2}|a_{-1}|\Big(\frac{1}{p+4}\cdot 2\pi\Big)^{1/p}\\
&\le C_p(|a_1|+|a_0|+|a_{-1}|).
\end{align*}
Note that
\[
a_{-1}=\frac{1}{2\pi i}\int_{|z|=r}f(z)\,dz,
\]
so we have
\[
|a_{-1}|\le \frac{1}{2\pi}\cdot r\int_{0}^{2\pi}|f(re^{i\theta})|\,d\theta
\]
and hence
\[
|a_{-1}|\le\frac{3}{2\pi}\int_{\Ud^*}|f(z)||z|^2\,dA(z).
\]
Similarly, we have
\begin{equation}
\label{|a_0|}
|a_0|\le\frac{2}{\pi}\int_{\Ud^*}|f(z)||z|^2\,dA(z)
\end{equation}
and 
\begin{equation}
\label{|a_1|}
|a_1|\le \frac{5}{2\pi}\int_{\Ud^*}|f(z)||z|^2\,dA(z).
\end{equation}
Therefore, we have a better estimate
\[
\left(\int_{\Ud^*}|\Fo(f)(z)|^p|z|^2\,dA(z)\right)^{1/p}\le C_p\int_{\Ud^*}|f(z)||z|^2\,dA(z)
\]
for $p\in[2,4)$, which proves the proposition.
\end{proof}

\subsection{The exponential Hartogs triangle $\Ht_{\infty}$}
\label{expHartogs}

We know that the Bergman projection
\[
\Bp:L^p(\Ht_{\infty})\to A^p(\Ht_{\infty})
\]
is bounded only when $p=2$, see \cite[Theorem 1.3]{ChenZey}. Now we turn to $\Fo$.

We first look at the auxiliary function\footnote{We write $A\approx B$ to mean that there exists $c>0$ such that $c^{-1}A<B<cA$.}
\[
\tilde I(x)=\int_0^1r^{x}e^{-\frac{2}{r}}\,dr
\]
for $x\in\rl$. By the asymptotic behavior of $I(x)$ \cite[Lemma 2.1]{ChenZey}, we see that
\begin{equation}
\label{xtoinfty}
\tilde I(x)\approx\frac{1}{x+1}
\end{equation}
and
\begin{equation}
\label{-xtoinfty}
\tilde I(-x)\approx\left(\frac{1}{2}\right)^{x-1}\Gamma(x-1)
\end{equation}
as $x\to\infty$. Let $\alpha=(\alpha_1,\alpha_2)\in\Z^2$, we consider the orthonormal basis $\{c_{\alpha}z_1^{\alpha_1}z_2^{\alpha_2}\}_{\alpha_1\ge0,\alpha_2\in\Z}$ for $A^2(\Ht_{\infty})$, where
\begin{equation}
\label{calpha}
c_{\alpha}^{-2}=\int_{\Ht_{\infty}}|z_1|^{2\alpha_1}|z_2|^{2\alpha_2}\,dV(z)=\frac{\pi}{\alpha_1+1}\int_{\Ud^*}|z_2|^{2\alpha_2}e^{-\frac{2\alpha_1+2}{|z_2|}}\,dA(z_2).
\end{equation}

\begin{proposition}
\label{A^p-regexp}
On the exponential Hartogs triangle $\Ht_{\infty}$, the Friedrichs operator
\[
\Fo:A^2(\Ht_{\infty})\to A^p(\Ht_{\infty})
\]
is bounded for $p\in[2,\infty)$, but not for $p=\infty$. 

\end{proposition}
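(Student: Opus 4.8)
The plan is to compute $\Fo$ explicitly on the monomial basis, exactly as was done on $\Ht$ in the previous subsection, and then to estimate the resulting finite sum of monomials in $L^p(\Ht_\infty)$ using the asymptotics \eqref{xtoinfty} and \eqref{-xtoinfty} for $\tilde I$. First I would write $f = \sum_{\alpha_1 \ge 0, \alpha_2 \in \Z} a_\alpha z_1^{\alpha_1} z_2^{\alpha_2} \in A^2(\Ht_\infty)$ and compute $\Fo(f) = \Bp(\bar f)$. Since the monomials are mutually orthogonal and $\overline{z_1^{\alpha_1} z_2^{\alpha_2}} = \bar z_1^{\alpha_1} \bar z_2^{\alpha_2}$, the Bergman projection of $\bar f$ picks out only the terms where $\bar z_1^{\alpha_1} \bar z_2^{\alpha_2}$ pairs nontrivially with a holomorphic monomial $z_1^{\beta_1} z_2^{\beta_2}$ with $\beta_1 \ge 0$, $\beta_2 \in \Z$; on the Reinhardt domain $\Ht_\infty$ this forces $\beta_1 = -\alpha_1$ and $\beta_2 = -\alpha_2$, so the only surviving term is $\alpha_1 = 0$. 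Hence $\Fo(f)(z) = \sum_{\alpha_2 \in \Z} \bar a_{(0,\alpha_2)} \, c_{(0,\alpha_2)}^2 \, c_{(0,-\alpha_2)}^{-2} \, z_2^{-\alpha_2}$ — wait, more carefully, $\Fo(f)$ is a function of $z_2$ alone, a (Laurent) series in $z_2$ supported on the exponents $-\alpha_2$ for which $(0,-\alpha_2)$ is an allowed index, i.e. all of $\Z$. The upshot is that $\Fo(f)$ depends only on the coefficients $a_{(0,\alpha_2)}$, and the map is essentially conjugation composed with the inversion $\alpha_2 \mapsto -\alpha_2$ on the one-variable slice $z_1 = 0$.

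Next I would bound $\|\Fo(f)\|_{L^p(\Ht_\infty)}$. Since $\Fo(f)$ is independent of $z_1$ and $|z_1| < e^{-1/|z_2|}$, integrating out $z_1$ gives a factor $\pi e^{-2/|z_2|}$, so $\|\Fo(f)\|_{L^p}^p \approx \int_{\Ud^*} |\Fo(f)(z_2)|^p e^{-2/|z_2|}\, dA(z_2)$, and by Minkowski's inequality this is $\le \big(\sum_{\alpha_2} |b_{\alpha_2}| \, (\int_{\Ud^*} |z_2|^{-p\alpha_2} e^{-2/|z_2|} dA(z_2))^{1/p}\big)^p$ where $b_{\alpha_2}$ are the coefficients of $\Fo(f)$. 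Passing to polar coordinates, $\int_{\Ud^*} |z_2|^{-p\alpha_2} e^{-2/|z_2|} dA = 2\pi \tilde I(1 - p\alpha_2)$. Using \eqref{xtoinfty} and \eqref{-xtoinfty}: for $\alpha_2 \le 0$ this is $\approx 1/(p|\alpha_2|+\text{const})$, harmless; for $\alpha_2 > 0$ large it is $\approx (1/2)^{p\alpha_2 - 2}\Gamma(p\alpha_2 - 2)$, which grows super-exponentially. The key is then to compare with the normalization: $|b_{\alpha_2}|$ is controlled by $|a_{(0,\alpha_2)}|$ times $c_{(0,\alpha_2)}^2 c_{(0,-\alpha_2)}^{-2}$, and $\|f\|_{L^2}^2 = \sum |a_\alpha|^2 c_\alpha^{-2}$, so $|a_{(0,\alpha_2)}| \le \|f\|_{L^2} c_{(0,\alpha_2)}$. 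Plugging \eqref{calpha} with $\alpha_1 = 0$, i.e. $c_{(0,\alpha_2)}^{-2} = \pi \, 2\pi \tilde I(2\alpha_2 + 1) \cdot$(const), and similarly $c_{(0,-\alpha_2)}^{-2} \approx \tilde I(-2\alpha_2 + 1)$, the Gamma-function growth in $\tilde I(1 - p\alpha_2)^{1/p}$ has to be defeated by the decay of $c_{(0,\alpha_2)}$ and the factor $c_{(0,-\alpha_2)}^{-1}$; one checks via Stirling that $\Gamma(p\alpha_2)^{1/p} / \Gamma(2\alpha_2)^{1/2}$ (roughly the relevant ratio) decays like a constant to a power $\alpha_2$ with that constant $< 1$ precisely because $p < \infty$ makes $\Gamma(p\alpha_2)^{1/p}$ grow slower than $\Gamma(2\alpha_2)^{1/2}$ raised to... — here I must be careful, since $\Gamma(cx)^{1/c} \sim (cx/e)^x$ up to lower order, and the relevant comparison is between the $p$-th root of $\tilde I(1-p\alpha_2) \approx (1/2)^{p\alpha_2}\Gamma(p\alpha_2)$ and the $2$-nd root of $1/c_{(0,\alpha_2)}^{-2} = 1/\tilde I(2\alpha_2+1)$ times $c_{(0,-\alpha_2)}^{-2}\approx (1/2)^{2\alpha_2}\Gamma(2\alpha_2)$. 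A Stirling computation should show the net exponential-type factor is summable in $\alpha_2$ for every finite $p$ but that the bound blows up as $p \to \infty$.

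For the negative statement — $\Fo$ is not bounded into $H^\infty(\Ht_\infty)$ — I would exhibit a single $g \in A^2(\Ht_\infty)$ with $\Fo(g) \notin L^\infty$, or show no uniform bound: take $g$ supported on one Laurent mode, say $g = c_{(0,\alpha_2)} z_2^{\alpha_2}$ with $\alpha_2$ large negative (so $z_2^{\alpha_2}$ is a genuine power series term and unit-normalized in $L^2$); then $\Fo(g)$ is a multiple of $z_2^{-\alpha_2}$, i.e. $z_2$ to a large positive power, whose sup norm on $\Ht_\infty$ is comparable to $c_{(0,\alpha_2)} \cdot c_{(0,\alpha_2)}^2 c_{(0,-\alpha_2)}^{-2} \cdot \sup_{\Ud^*}|z_2|^{-\alpha_2}$; since $|z_2| < 1$ the sup of $|z_2|^{-\alpha_2}$ (with $-\alpha_2 > 0$) is $1$, so one actually needs $\alpha_2$ large positive instead, making $\Fo(g)$ a negative power of $z_2$ which is unbounded near $z_2 = 0$. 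The precise choice and the verification that $\|\Fo(g)\|_{L^\infty}/\|g\|_{L^2} \to \infty$ follows from the same asymptotics. The main obstacle I anticipate is the bookkeeping in the Stirling comparison: one needs to track the constants in $\tilde I(1-p\alpha_2)^{1/p}$, $c_{(0,\alpha_2)}$, and $c_{(0,-\alpha_2)}^{-1}$ carefully enough to see summability for all finite $p$ — in particular confirming the right-hand tail ($\alpha_2 \to +\infty$) converges, since the left tail is automatic — and to see that the implied constant degenerates as $p \to \infty$, matching the sharp failure at $p = \infty$.
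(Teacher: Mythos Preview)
Your overall strategy---compute $\Fo$ explicitly on monomials, reduce to a one-variable Laurent series in $z_2$, and control the resulting sum via the asymptotics \eqref{xtoinfty}, \eqref{-xtoinfty} and Stirling---is exactly the paper's approach. Two points need correction, though, and one is not cosmetic.

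\textbf{The coefficient formula is wrong.} On a Reinhardt domain $\langle \bar w^{\alpha},z^{\beta}\rangle=\int \bar w^{\alpha+\beta}\,dV$ vanishes unless $\alpha+\beta=0$, in which case it equals a fixed constant $C$. Hence
\[
\Fo(f)(z)=C\sum_{\alpha_2\in\Z} c_{(0,\alpha_2)}^2\,\bar a_{(0,-\alpha_2)}\,z_2^{\alpha_2},
\]
with \emph{no} factor $c_{(0,-\alpha_2)}^{-2}$. This matters: with your stated multiplier $c_{(0,\alpha_2)}^2 c_{(0,-\alpha_2)}^{-2}$ and your Parseval bound $|a_{(0,\alpha_2)}|\le c_{(0,\alpha_2)}\|f\|_{L^2}$, the term for $\alpha_2\to+\infty$ contains $c_{(0,-\alpha_2)}^{-2}\approx\tilde I(-2\alpha_2+1)\approx(1/2)^{2\alpha_2}\Gamma(2\alpha_2)$, which blows up, and the sum diverges. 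With the correct formula your Parseval bound does close; but also note that \emph{neither} tail is then ``automatic'': both $\alpha_2\to+\infty$ and $\alpha_2\to-\infty$ produce a ratio of Gamma functions requiring Stirling, and the paper handles them separately.

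\textbf{$L^1$ versus $L^2$ control of the coefficients.} The paper does not use Parseval. Instead it bounds $|a_{(0,-\alpha_2)}|$ by integrating Cauchy's formula against $r_1\,dr_1$ and $r_2^{-\alpha_2+1}\,dr_2$, obtaining
\[
|a_{(0,-\alpha_2)}|\le C\big(\tilde I(-\alpha_2+1)\big)^{-1}\|f\|_{L^1(\Ht_\infty)}.
\]
This yields the stronger conclusion $\|\Fo(f)\|_{L^p}\le C_p\|f\|_{L^1}$, parallel to what was done on $\Ht$. Your $L^2$ route gives only $\|\Fo(f)\|_{L^p}\le C_p\|f\|_{L^2}$, which suffices for the proposition as stated but loses the $L^1\to L^p$ improvement.

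\textbf{Failure at $p=\infty$.} No asymptotic argument is needed: by the formula above, $\Fo(z_2)=c\,z_2^{-1}\notin H^\infty(\Ht_\infty)$. This is precisely the case $\alpha_2=1$ you eventually settle on; a single explicit function already witnesses unboundedness.
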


\begin{proof}
A straightforward computation shows
\[
\Fo(z_2)=\frac{c}{z_2}\notin H^{\infty}(\Ht_{\infty})
\]
for some constant $c$ (see \eqref{F(f)(z)exp} below). So $\Fo$ does not map $H^{\infty}(\Ht_{\infty})$ into $H^{\infty}(\Ht_{\infty})$.

For the boundedness part, let $z,w\in\Ht_{\infty}$, then we have the Bergman kernel
\[
B(z,w)=\sum_{\alpha_1\ge0,\alpha_2\in\Z}c_{\alpha}^2z_1^{\alpha_1}z_2^{\alpha_2}\bar w_1^{\alpha_1}\bar w_2^{\alpha_2}.
\]
For $f\in A^2(\Ht_{\infty})$, we write
\[
f(z)=\sum_{\alpha_1\ge0,\alpha_2\in\Z}a_{\alpha}z_1^{\alpha_1}z_2^{\alpha_2}.
\]
Then we see that
\begin{equation}
\label{F(f)(z)exp}
\begin{split}
\Bp(\bar f)(z)
&=\int_{\Ht_{\infty}}\Big(\sum_{\alpha_1\ge0,\alpha_2\in\Z}c_{\alpha}^2z_1^{\alpha_1}z_2^{\alpha_2}\bar w_1^{\alpha_1}\bar w_2^{\alpha_2}\Big)\Big(\overline{\sum_{\beta_1\ge0,\beta_2\in\Z}a_{\beta}w_1^{\beta_1}w_2^{\beta_2}}\Big)\,dV(w)\\
&=\pi\sum_{\alpha_2,\beta_2\in\Z}\int_{\Ud^*}c_{(0,\alpha_2)}^2z_2^{\alpha_2}\bar a_{(0,\beta_2)}\bar w_2^{\alpha_2+\beta_2}e^{-\frac{2}{|w_2|}}\,dA(w_2)\\
&=C\sum_{\alpha_2\in\Z}c_{(0,\alpha_2)}^2z_2^{\alpha_2}\bar a_{(0,-\alpha_2)}
\end{split}
\end{equation}
for $z\in\Ht_{\infty}$. Note that the residue of a meromorphic function can be computed by integration along a circle centered at $0$ of some radius $r$. So for $z_2\in\Ud^*$ and $r_1<\exp(-1/|z_2|)$ we have
\[
\int_{|z_1|=r_1}\frac{f(z_1,z_2)\,dz_1}{z_1}=2\pi i\sum_{\alpha_2\in\Z}a_{(0,\alpha_2)}z_2^{\alpha_2},
\]
or equivalently
\[
\frac{1}{2\pi}\int_0^{2\pi}f(r_1e^{i\theta_1},z_2)\,d\theta_1=\sum_{\alpha_2\in\Z}a_{(0,\alpha_2)}z_2^{\alpha_2}.
\]
Similarly, for $r_2\in(0,1)$ we have
\[
\frac{1}{2\pi}\int_{|z_2|=r_2}\int_0^{2\pi}\frac{f(r_1e^{i\theta_1},z_2)\,d\theta_1}{z_2^{\alpha_2+1}}\,dz_2=2\pi ia_{(0,\alpha_2)},
\]
or equivalently
\[
\frac{1}{4\pi^2}\int_0^{2\pi}\int_0^{2\pi}\frac{f(r_1e^{i\theta_1},r_2e^{i\theta_2})\,d\theta_1\,d\theta_2}{(r_2e^{i\theta_2})^{\alpha_2}}=a_{(0,\alpha_2)}.
\]
So for $\alpha_2\in\Z$ we obtain
\[
a_{(0,-\alpha_2)}=\frac{1}{4\pi^2}\int_0^{2\pi}\int_0^{2\pi}f(r_1e^{i\theta_1},r_2e^{i\theta_2})r_2^{\alpha_2}e^{i\alpha_2\theta_2}\,d\theta_1\,d\theta_2,
\]
and hence
\[
|a_{(0,-\alpha_2)}|\le\frac{1}{4\pi^2}\int_0^{2\pi}\int_0^{2\pi}|f(r_1e^{i\theta_1},r_2e^{i\theta_2})|r_2^{\alpha_2}\,d\theta_1\,d\theta_2.
\]
Therefore, we have
\[
\int_0^1\int_0^{e^{-\frac{1}{r_2}}}|a_{(0,-\alpha_2)}|r_1r_2^{-\alpha_2+1}\,dr_1\,dr_2\le\frac{1}{4\pi^2}\int_0^1\int_0^{e^{-\frac{1}{r_2}}}\int_0^{2\pi}\int_0^{2\pi}|f(r_1e^{i\theta_1},r_2e^{i\theta_2})|r_1r_2\,d\theta_1\,d\theta_2\,dr_1\,dr_2,
\]
which implies
\begin{equation}
\label{|a_alpha|}
|a_{(0,-\alpha_2)}|\le C\left(\tilde I(-\alpha_2+1)\right)^{-1}\int_{\Ht_{\infty}}|f(z)|\,dV(z).
\end{equation}
For $\alpha_2\in\Z$, if we let $\alpha_1=0$ in \eqref{calpha}, then we obtain
\begin{equation}
\label{c^2_alpha}
c_{(0,\alpha_2)}^2=C\left(\tilde I(2\alpha_2+1)\right)^{-1}.
\end{equation}
Note that for each $\alpha_2\in\Z$, we have
\[
\|z_2^{\alpha_2}\|_{L^p(\Ht_{\infty})}=\left(\int_{\Ht_{\infty}}|z_2|^{\alpha_2p}\,dV(z)\right)^{1/p}=C_p\left(\tilde I(\alpha_2p+1)\right)^{1/p}.
\]
Therefore, we have the following estimate
\begin{equation*}
\left(\int_{\Ht_{\infty}}|\Bp(\bar f)(z)|^p\,dV(z)\right)^{1/p}\le C_p\sum_{\alpha_2\in\Z}\left(\tilde I(2\alpha_2+1)\right)^{-1}\left(\tilde I(\alpha_2p+1)\right)^{1/p}\left(\tilde I(-\alpha_2+1)\right)^{-1}\int_{\Ht_{\infty}}|f(z)|\,dV(z).
\end{equation*}

Note that by the asymptotic behavior of $\tilde I$ and Stirling's formula, we have
\begin{align*}
\sum_{\alpha_2\ge0}\left(\tilde I(2\alpha_2+1)\right)^{-1}\left(\tilde I(\alpha_2p+1)\right)^{1/p}\left(\tilde I(-\alpha_2+1)\right)^{-1}
&\approx\sum_{\alpha_2\ge0}\frac{2\alpha_2+2}{(\alpha_2p+2)^{1/p}\cdot\big(\frac{1}{2}\big)^{\alpha_2-2}\Gamma(\alpha_2-2)}\\
&\lesssim\sum_{\alpha_2\ge0}\frac{2^{\alpha_2}}{\alpha_2!}<\infty.
\end{align*}
and for $k=-\alpha_2$
\begin{align*}
\sum_{\alpha_2<0}\left(\tilde I(2\alpha_2+1)\right)^{-1}\left(\tilde I(\alpha_2p+1)\right)^{1/p}\left(\tilde I(-\alpha_2+1)\right)^{-1}
&=\sum_{k>0}\Big(\tilde I(-2k+1)\Big)^{-1}\Big(\tilde I(-kp+1)\Big)^{\frac{1}{p}}\Big(\tilde I(k+1)\Big)^{-1}\\
&\approx\sum_{k>0}\frac{(k+2)\big(\frac{1}{2}\big)^{\frac{kp-2}{p}}\big(\Gamma(kp-2)\big)^{1/p}}{\big(\frac{1}{2}\big)^{2k-2}\Gamma(2k-2)}\\
&\approx\sum_{k>0}\frac{(k+2)2^k(kp-3)^{\frac{1}{2p}}\Big(\frac{kp-3}{e}\Big)^{k-3/p}}{(2k-3)^{1/2}\Big(\frac{2k-3}{e}\Big)^{2k-3}}\\
&\lesssim\sum_{k>0}\frac{\tau^k}{k^k}<\infty
\end{align*}
for some constant $\tau>1$. So we have
\[
\sum_{\alpha_2\in\Z}\left(\tilde I(2\alpha_2+1)\right)^{-1}\left(\tilde I(\alpha_2p+1)\right)^{1/p}\left(\tilde I(-\alpha_2+1)\right)^{-1}<\infty.
\]
and hence a better estimate
\[
\left(\int_{\Ht_{\infty}}|\Bp(\bar f)(z)|^p\,dV(z)\right)^{1/p}\le C_p\int_{\Ht_{\infty}}|f(z)|\,dV(z),
\]
which implies that
\[
\Fo:A^2(\Ht_{\infty})\to A^p(\Ht_{\infty})
\]
is bounded for $p\in[2,\infty)$.
\end{proof}

\subsection{The square of the Friedrichs operator, $\s=\Fo\circ\Fo$}

We have already seen in \S \ref{Hartogstriangle} that 
\begin{enumerate}
\item $\Bp:L^p(\Ht)\to A^p(\Ht)$ is bounded for $p\in(4/3,4)$, and
\item $\Fo:A^2(\Ht)\to A^p(\Ht)$ is bounded for $p\in[2,4)$.
\end{enumerate}

We again consider the orthonormal basis $\Big\{\sqrt{\frac{n+2}{\pi}}z^n\Big\}_{n\ge-1}$ for the space $A^2(\Ud^*,|z|^2)$. So we have the formula for the Bergman kernel on $\Ud^*\times\Ud^*$
\[
B(z,w)=\sum_{n\ge-1}\frac{n+2}{\pi}z^n\bar w^n=\frac{1}{\pi}\cdot\frac{1}{z\bar w}\cdot\frac{1}{(1-z\bar w)^2}.
\]

Note that when $p=4$, $\Bp$ and $\Fo$ are unbounded since $\Bp(\bar z)=c/z\notin A^4(\Ud^*,|z|^2)$ for some constant $c$. However, $\s(z)=\frac{3}{4}z\in A^4(\Ud^*,|z|^2)$.

\begin{proposition}
On the Hartogs triangle $\Ht$, the operator $\s$ is bounded on $L^p(\Ht)$ for all $p\in[2,\infty]$. Indeed, we have the mapping property that
\begin{enumerate}
\item $\s:A^2(\Ht)\to A^p(\Ht)$ is bounded for all $p\in[2,4)$;
\item $\s:A^4(\Ht)\to A^p(\Ht)$ is bounded for all $p\in[4,\infty]$.\footnote{When $p=\infty$, the target space becomes $H^{\infty}(\Ht)$.}
\end{enumerate}
In either case, we have a better estimate $\|\s(f)\|_{L^p}\le C_p\|f\|_{L^1}$.
\end{proposition}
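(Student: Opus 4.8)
The plan is to compute $\s$ explicitly on the weighted model $A^2(\Ud^*,|z|^2)$ that was used for the previous proposition, observe that it is a rank-three operator, and then show that on $A^4$ its range drops to the affine functions. \emph{Step 1 (explicit formula).} Let $f=\sum_{k\ge-1}a_kz^k\in A^2(\Ud^*,|z|^2)$. By \eqref{F(f)(z)}, $\Fo(f)=\frac{\bar a_1}{2}z^{-1}+\bar a_0+\frac{3\bar a_{-1}}{2}z$ again lies in $A^2(\Ud^*,|z|^2)$, with Laurent coefficients $b_{-1}=\frac{\bar a_1}{2}$, $b_0=\bar a_0$, $b_1=\frac{3\bar a_{-1}}{2}$ and $b_k=0$ for $k\ge 2$. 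Applying \eqref{F(f)(z)} once more gives
\[
\s(f)(z)=\Fo(\Fo(f))(z)=\frac{3a_{-1}}{4}z^{-1}+a_0+\frac{3a_1}{4}z .
\]
Thus $\s$ depends only on the three coefficients $a_{-1},a_0,a_1$ of $f$; in particular $\s(z)=\frac34 z$, recovering the computation in the remark above. For brevity write $\|g\|_p:=\big(\int_{\Ud^*}|g|^p|z|^2\,dA\big)^{1/p}$, which is the norm on $L^p(\Ht)$ read in the model.

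\emph{Step 2 (proof of (1)).} By \eqref{|a_0|}, \eqref{|a_1|}, and the bound for $|a_{-1}|$ obtained in the proof of the previous proposition, each of $|a_{-1}|,|a_0|,|a_1|$ is $\le C\int_{\Ud^*}|f||z|^2\,dA$, which is comparable to $\|f\|_{L^1(\Ht)}$. Hence
\[
\|\s(f)\|_p\le C\big(|a_{-1}|\,\|z^{-1}\|_p+|a_0|\,\|1\|_p+|a_1|\,\|z\|_p\big)\le C_p\|f\|_{L^1(\Ht)} .
\]
Now $\|1\|_p$ and $\|z\|_p$ are finite for every $p\in[1,\infty]$, whereas $\|z^{-1}\|_p^p=\int_{\Ud^*}|z|^{2-p}\,dA<\infty$ precisely when $p<4$. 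This gives (1): $\s:A^2(\Ht)\to A^p(\Ht)$ is bounded, with the improved estimate $\|\s(f)\|_{L^p}\le C_p\|f\|_{L^1}$, for all $p\in[2,4)$.

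\emph{Step 3 (proof of (2)).} Let $f\in A^4(\Ht)$; in the same model it corresponds to a function in $A^4(\Ud^*,|z|^2)$ (integrability of $|f|^4$ on the relevant slice follows by subharmonicity). Write $f=a_{-1}z^{-1}+g$ with $g=\sum_{k\ge0}a_kz^k$ holomorphic across the origin, hence bounded near $0$. If $a_{-1}\ne0$, then $\int_{|z|<1/2}|a_{-1}z^{-1}|^4|z|^2\,dA=|a_{-1}|^4\int_{|z|<1/2}|z|^{-2}\,dA=\infty$, while $f$ and $g$ are both in $L^4(|z|^2\,dA)$ near $0$, contradicting the triangle inequality. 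So $a_{-1}=0$, and by Step 1, $\s(f)(z)=a_0+\frac{3a_1}{4}z$ is affine in $z$. Since $\|1\|_p,\|z\|_p<\infty$ for every $p\in[1,\infty]$ and $|a_0|,|a_1|\le C\|f\|_{L^1}$ (Step 2, valid since $A^4\subseteq A^2$), we get $\|\s(f)\|_{L^p}\le C_p\|f\|_{L^1}$ for all $p\in[4,\infty]$, which is (2).

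\emph{Step 4 (the $L^p$ statement and the main difficulty).} Since $\Ht$ is bounded, $A^p(\Ht)\subseteq A^2(\Ht)$ for $p\in[2,4)$ and $A^p(\Ht)\subseteq A^4(\Ht)$ for $p\in[4,\infty]$; combined with (1) and (2) this shows $\s$ is bounded on $A^p(\Ht)\subset L^p(\Ht)$ for every $p\in[2,\infty]$, with the stated $L^1\to L^p$ gain. I expect Step 3 to be the only real point: the composition formula of Step 1 produces a $z^{-1}$ term, which fails to lie in $L^p(\Ud^*,|z|^2)$ once $p\ge4$, so without the observation that membership of the input in $A^4$ forces its ``residue'' coefficient $a_{-1}$ to vanish one cannot extend the range past $p=4$. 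Everything else reduces to the composition identity and elementary weighted-monomial norm computations.
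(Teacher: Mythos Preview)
Your proof is correct and follows essentially the same approach as the paper. The only cosmetic difference is that for case (1) the paper simply invokes the composition $\s=\Fo\circ\Fo$ together with the already-established $L^1\to L^p$ bound for $\Fo$, whereas you write out the explicit three-term formula for $\s(f)$ and bound it directly; for case (2) the two arguments are identical (membership in $A^4$ forces $a_{-1}=0$, so $\s(f)=a_0+\tfrac34 a_1 z$ and the coefficient bounds \eqref{|a_0|}, \eqref{|a_1|} finish the job).
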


\begin{proof}
Case (1) follows from the fact that $\Fo:A^2\to A^p$ is bounded for all $p\in[2,4)$ and that $\s=\Fo\circ\Fo$. Note that $\s(1/z)=c/z\notin A^4(\Ud^*,|z|^2)$ for some constant $c$. So $\s$ does not map $A^2(\Ud^*,|z|^2)$ into $A^4(\Ud^*,|z|^2)$.

For case (2), we assume $p\ge4$, and thus $1/z\notin A^p(\Ud^*,|z|^2)$. For any $f\in A^p(\Ud^*,|z|^2)$, since $A^p(\Ud^*,|z|^2)\subset A^2(\Ud^*,|z|^2)$, we see that $f$ has the expansion
\[
f(z)=\sum_{n=0}^{\infty}a_nz^n.
\]
Note that $1/z\notin A^p(\Ud^*,|z|^2)$, $f$ does not have the term $a_{-1}z^{-1}$. Therefore, by \eqref{F(f)(z)} we have
\begin{align*}
\s(f)(z)
&=\Fo\circ\Fo(f)(z)\\
&=\Fo\Big(\frac{\bar a_1}{2}z^{-1}+\bar a_0\Big)(z)\\
&=a_0+\frac{3}{4}a_1z.
\end{align*}

By \eqref{|a_0|} and \eqref{|a_1|}, we have
\begin{equation*}
|a_0|\le\frac{2}{\pi}\int_{\Ud^*}|f(z)||z|^2\,dA(z)
\end{equation*}
and 
\begin{equation*}
|a_1|\le \frac{5}{2\pi}\int_{\Ud^*}|f(z)||z|^2\,dA(z).
\end{equation*}

So we obtain for $p\in[4,\infty)$
\[
\left(\int_{\Ud^*}|\s(f)(z)|^p|z|^2\,dA(z)\right)^{1/p}\le C_p(|a_0|+|a_1|)\le C_p\int_{\Ud^*}|f(z)||z|^2\,dA(z),
\]
and for $p=\infty$
\[
\left\|\s(f)(z)\right\|_{\infty}\le C(|a_0|+|a_1|)\le C\int_{\Ud^*}|f(z)||z|^2\,dA(z).
\]
\end{proof}

\begin{remark}
We note that the idea here is different from using a Schur's test argument, since we only use the holomorphicity of $f\in A^2(\Ht)$ rather than looking at some kernel estimates.
\end{remark}

We see in the proof of Proposition \ref{A^p-regexp} that $\Fo$ does not map $H^{\infty}(\Ht_{\infty})$ into $H^{\infty}(\Ht_{\infty})$. It is easy to see that $\s$ does not map $A^2(\Ht_{\infty})$ into $H^{\infty}(\Ht_{\infty})$, since by \eqref{F(f)(z)exp} we have
\[
\s(\frac{1}{z_2})=\frac{c}{z_2}\notin H^{\infty}(\Ht_{\infty})
\]
for some constant $c$. However, we have the following.

\begin{proposition}
On the exponential Hartogs triangle $\Ht_{\infty}$, the operator $\s:H^{\infty}(\Ht_{\infty})\to H^{\infty}(\Ht_{\infty})$ is bounded.
\end{proposition}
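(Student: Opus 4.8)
The plan is to make $\s$ completely explicit on the coefficients that matter, just as formula \eqref{F(f)(z)exp} already does for $\Fo$, and then to sum the resulting diagonal multiplier with the help of the asymptotics \eqref{xtoinfty} and \eqref{-xtoinfty}. Since $\Ht_{\infty}$ has finite volume, $H^{\infty}(\Ht_{\infty})\subset A^2(\Ht_{\infty})$, so $\Fo$, and hence $\s=\Fo\circ\Fo$, is defined on $H^{\infty}(\Ht_{\infty})$. Recall from the proof of Proposition \ref{A^p-regexp} that if $f=\sum_{\alpha_1\ge 0,\,\alpha_2\in\Z}a_{\alpha}z_1^{\alpha_1}z_2^{\alpha_2}$, then \eqref{F(f)(z)exp} reads
\[
\Fo(f)(z)=C\sum_{\alpha_2\in\Z}c_{(0,\alpha_2)}^2\,\overline{a_{(0,-\alpha_2)}}\,z_2^{\alpha_2},
\]
where $C>0$ is a real constant and, by \eqref{calpha} with $\alpha_1=0$, $c_{(0,m)}^2$ is a positive real multiple of $\tilde I(2m+1)^{-1}$; thus $\Fo$ depends only on the $z_1$-independent coefficients $a_{(0,\alpha_2)}$, $\alpha_2\in\Z$.

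The second step is to use the boundedness of $f$ to eliminate negative powers of $z_2$. For $f\in H^{\infty}(\Ht_{\infty})$ the circular average $\frac{1}{2\pi}\int_0^{2\pi}f(r_1e^{i\theta_1},z_2)\,d\theta_1=\sum_{\alpha_2\in\Z}a_{(0,\alpha_2)}z_2^{\alpha_2}$ is, as a function of $z_2$, holomorphic on $\Ud^*$ and bounded by $\|f\|_{H^{\infty}}$, so by the Riemann removable singularity theorem it extends holomorphically across the origin; hence $a_{(0,m)}=0$ for every $m<0$, and the Cauchy estimates (letting the radius tend to $1$) give $|a_{(0,m)}|\le\|f\|_{H^{\infty}}$ for every $m\ge 0$.

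Now I would apply \eqref{F(f)(z)exp} twice. Writing $g=\Fo(f)$, the only possibly nonzero expansion coefficients of $g$ are $g_{(0,m)}=Cc_{(0,m)}^2\,\overline{a_{(0,-m)}}$, and these vanish for $m>0$ by the previous step. Feeding $g$ into \eqref{F(f)(z)exp} again, and using that $C$ and the $c_{(0,m)}^2$ are real and positive so that the two conjugations compose to the identity on the coefficients of $f$, one obtains
\[
\s(f)(z)=\sum_{\gamma\ge 0}\lambda_{\gamma}\,a_{(0,\gamma)}\,z_2^{\gamma},\qquad \lambda_{\gamma}=C^2 c_{(0,\gamma)}^2 c_{(0,-\gamma)}^2,
\]
a positive constant (independent of $\gamma$) times $\bigl(\tilde I(2\gamma+1)\,\tilde I(-2\gamma+1)\bigr)^{-1}$; each $\lambda_{\gamma}$ is finite since $\tilde I(x)\in(0,\infty)$ for every real $x$. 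Because $|z_2|<1$ on $\Ht_{\infty}$ and $|a_{(0,\gamma)}|\le\|f\|_{H^{\infty}}$, it remains only to check that $\sum_{\gamma\ge 0}\lambda_{\gamma}<\infty$. By \eqref{xtoinfty}, $\tilde I(2\gamma+1)^{-1}\approx 2\gamma+2$, and by \eqref{-xtoinfty}, $\tilde I(-2\gamma+1)^{-1}\approx 2^{2\gamma-2}/\Gamma(2\gamma-2)$; hence $\lambda_{\gamma}\lesssim(2\gamma+2)\,2^{2\gamma}/\Gamma(2\gamma-2)$, and Stirling's formula (or the ratio test) shows this tail is summable, indeed decaying faster than $\tau^{\gamma}/\gamma!$ for any $\tau$. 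Consequently the series for $\s(f)$ converges uniformly on $\overline{\Ht_{\infty}}$ to a bounded holomorphic function, which must coincide with the $A^2$-element $\s(f)$, and $\|\s(f)\|_{H^{\infty}(\Ht_{\infty})}\le\bigl(\sum_{\gamma\ge 0}\lambda_{\gamma}\bigr)\|f\|_{H^{\infty}(\Ht_{\infty})}$.

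The only genuinely delicate point is the bookkeeping in the third step: keeping track of which coefficients survive each of the two applications of $\Fo$, verifying that the two complex conjugations cancel on the coefficients $a_{(0,\gamma)}$ (which uses that all the normalization constants are real and positive), and invoking the removable singularity theorem to see that a bounded $f$ produces no negative frequencies in $z_2$. Once the diagonal form of $\s$ is established, the convergence of $\sum_{\gamma\ge 0}\lambda_{\gamma}$ follows at once from \eqref{xtoinfty}, \eqref{-xtoinfty}, and Stirling --- and is in fact more robust than the analogous estimate in the proof of Proposition \ref{A^p-regexp}, since here there is no compensating $p$-dependent factor and the terms decay super-exponentially.
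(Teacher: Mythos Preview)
Your argument is correct and follows essentially the same route as the paper: compute $\s$ explicitly via two applications of \eqref{F(f)(z)exp} to obtain the diagonal multiplier $\lambda_{\gamma}=C^2c_{(0,\gamma)}^2c_{(0,-\gamma)}^2$ on the $z_2$-coefficients, then sum using the asymptotics \eqref{xtoinfty}, \eqref{-xtoinfty}. The only notable differences are cosmetic---you justify $a_{(0,m)}=0$ for $m<0$ explicitly via the removable singularity theorem (the paper simply writes the expansion with $\alpha_2\ge 0$), and you bound $|a_{(0,\gamma)}|\le\|f\|_{H^{\infty}}$ directly by Cauchy estimates rather than through the integral inequality \eqref{|a_alpha|}, which spares you the extra factor $(\tilde I(\gamma+1))^{-1}\approx\gamma+2$ in the final sum.
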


\begin{proof}
For $f\in H^{\infty}(\Ht_{\infty})$, we write
\[
f(z)=\sum_{\alpha_1,\alpha_2\ge0}a_{\alpha}z_1^{\alpha_1}z_2^{\alpha_2}.
\]
By \eqref{F(f)(z)exp}, we have
\begin{align*}
\s(f)(z)
&=\Fo\circ\Fo(f)(z)\\
&=\Fo\Big(C\sum_{\alpha_2\le0}c^2_{(0,\alpha_2)}z_2^{\alpha_2}\bar a_{(0,-\alpha_2)}\Big)(z)\\
&=C\sum_{\alpha_2\ge0}c^2_{(0,\alpha_2)}c^2_{(0,-\alpha_2)}a_{(0,\alpha_2)}z_2^{\alpha_2}
\end{align*}
for some constant $C$. Note that by \eqref{|a_alpha|} and \eqref{c^2_alpha} we have
\begin{equation*}
|a_{(0,\alpha_2)}|\le C\left(\tilde I(\alpha_2+1)\right)^{-1}\int_{\Ht_{\infty}}|f(z)|\,dV(z)\le C\left(\tilde I(\alpha_2+1)\right)^{-1}\|f\|_{L^{\infty}}
\end{equation*}
and
\begin{equation*}
c_{(0,\alpha_2)}^2=C\left(\tilde I(2\alpha_2+1)\right)^{-1}.
\end{equation*}
Hence for $z\in\Ht_{\infty}$, we see that
\begin{align*}
|\s(f)(z)|
&\le C\sum_{\alpha_2\ge0}c^2_{(0,\alpha_2)}c^2_{(0,-\alpha_2)}|a_{(0,\alpha_2)}|\\
&\le C\sum_{\alpha_2\ge0}\Big(\tilde I(2\alpha_2+1)\Big)^{-1}\Big(\tilde I(-2\alpha_2+1)\Big)^{-1}\left(\tilde I(\alpha_2+1)\right)^{-1}\|f\|_{L^{\infty}}.
\end{align*}
Note that for $k=\alpha_2\ge0$, we have
\[
\sum_{k\ge0}\Big(\tilde I(2k+1)\Big)^{-1}\Big(\tilde I(-2k+1)\Big)^{-1}\left(\tilde I(k+1)\right)^{-1}\approx\sum_{k\ge0}\frac{(k+2)(2k+2)}{\big(\frac{1}{2}\big)^{2k-2}\Gamma(2k-2)}<\infty.
\]
Therefore, we obtain
\[
\|\s(f)\|_{L^{\infty}}\le C\|f\|_{L^{\infty}}.
\]
\end{proof}

\begin{remark}
In both cases of $\Ht$ and $\Ht_{\infty}$ we observe different mapping properties for $\s$, $\Fo$, and $\Bp$, where $\s$ smooths the most. It is a curious question to investigate similar smoothing properties and to compare the gain in between these operators on general pseudoconvex domains.
\end{remark}


\begin{thebibliography}{KLLR96}

\bibitem[BC00]{BerChar}
B.~Berndtsson and P.~Charpentier.
\newblock A {S}obolev mapping property of the {B}ergman kernel.
\newblock {\em Math. Z.}, 235:1--10, 2000.

\bibitem[Bel81]{Bell81a}
Steven~R. Bell.
\newblock Biholomorphic mappings and the {$\bar \partial $}-problem.
\newblock {\em Ann. of Math. (2)}, 114(1):103--113, 1981.

\bibitem[Cat80]{Catlin80}
D.~Catlin.
\newblock Boundary behavior of holomorphic functions on pseudoconvex domains.
\newblock {\em J. Diff. Geom.}, 15:605--625, 1980.

\bibitem[Che17]{Chen14}
Liwei Chen.
\newblock The {$L^p$} boundedness of the {B}ergman projection for a class of
  bounded {H}artogs domains.
\newblock {\em J. Math. Anal. Appl.}, 448(1):598--610, 2017.

\bibitem[CZ16a]{ChaZey16}
Debraj Chakrabarti and Yunus E. Zeytuncu.
\newblock ${L}^p$ mapping properties of the {B}ergman projection on the
  {H}artogs triangle.
\newblock {\em Proc. Amer. Math. Soc.}, 144(4):1643--1653, 2016.

\bibitem[CZ16b]{ChenZey}
Liwei Chen and Yunus E. Zeytuncu.
\newblock Weighted {B}ergman projections on the {H}artogs triangle: exponential
  decay.
\newblock {\em New York J. Math.}, 22:1271--1282, 2016.

\bibitem[Det81]{Det}
J.~Detraz.
\newblock Classes de bergman de fonctions harmoniques.
\newblock {\em Bull. Soc. Math. France}, 109:259--268, 1981.

\bibitem[EM16]{EdhMcN16}
L. D. Edholm and J. D. McNeal.
\newblock The {B}ergman projection on fat {H}artogs triangles: ${L}^p$
  boundedness.
\newblock {\em Proc. Amer. Math. Soc.}, 144(5):2185--2196, 2016.

\bibitem[EM17]{EdhMcN17}
L. D. Edholm and J. D. McNeal.
\newblock {B}ergman subspaces and subkernels: degenerate ${L}^p$ mapping and
  zeroes.
\newblock {\em J. Geom. Anal.}, 2017.

\bibitem[Fri37]{Friedrichs}
Kurt Friedrichs.
\newblock On certain inequalities and characteristic value problems for
  analytic functions and for functions of two variables.
\newblock {\em Trans. Amer. Math. Soc.}, 41(3):321--364, 1937.

\bibitem[HM12]{HerMcN10}
A.-K. Herbig and J. D. McNeal.
\newblock A smoothing property of the {B}ergman projection.
\newblock {\em Math. Ann.}, 354(2):427--449, 2012.

\bibitem[HMS13]{HerMcNStr}
A.-K. Herbig, J.~D. McNeal, and E.~J. Straube.
\newblock Duality of holomorphic function spaces and smoothing properties of
  the bergman projection.
\newblock {\em Trans. Amer. Math. Soc.}, 366(2):647--665, 2013.

\bibitem[HZ17]{HarZey}
Philip~S. Harrington and Yunus~E. Zeytuncu.
\newblock {$L^p$} mapping properties for the {C}auchy-{R}iemann equations on
  {L}ipschitz domains admitting subelliptic estimates.
\newblock {\em arXiv:1705.07374}, 2017.

\bibitem[JK95]{JerKen}
D.~Jerison and C.~Kenig.
\newblock The inhomogeneous dirichlet problem in {L}ipschitz domains.
\newblock {\em J. Funct. Anal.}, 130:161--219, 1995.

\bibitem[KLLR96]{KraRoc}
Steven~G. Krantz, Song-Ying Li, Peng Lin, and Richard Rochberg.
\newblock The effect of boundary regularity on the singular numbers of
  {F}riedrichs operators on {B}ergman spaces.
\newblock {\em Michigan Math. J.}, 43(2):337--348, 1996.

\bibitem[Koh99]{Kohn}
J.~J. Kohn.
\newblock Quantitative estimates for global regularity.
\newblock In {\em Analysis and geometry in several complex variables ({K}atata,
  1997)}, Trends Math., pages 97--128. Birkh\"auser Boston, Boston, MA, 1999.

\bibitem[PS00]{PutinarShapiro1}
Mihai Putinar and Harold~S. Shapiro.
\newblock The {F}riedrichs operator of a planar domain.
\newblock In {\em Complex analysis, operators, and related topics}, volume 113
  of {\em Oper. Theory Adv. Appl.}, pages 303--330. Birkh\"auser, Basel, 2000.

\bibitem[PS01]{PutinarShapiro2}
Mihai Putinar and Harold~S. Shapiro.
\newblock The {F}riedrichs operator of a planar domain. {II}.
\newblock In {\em Recent advances in operator theory and related topics
  ({S}zeged, 1999)}, volume 127 of {\em Oper. Theory Adv. Appl.}, pages
  519--551. Birkh\"auser, Basel, 2001.

\bibitem[RZ16]{RavZey}
Sivaguru Ravisankar and Yunus~E. Zeytuncu.
\newblock A note on smoothing properties of the {B}ergman projection.
\newblock {\em Internat. J. Math.}, 27(11):1650087, 10, 2016.

\bibitem[Sha87]{ShapiroUnbddQuad}
Harold~S. Shapiro.
\newblock Unbounded quadrature domains.
\newblock In {\em Complex analysis, {I} ({C}ollege {P}ark, {M}d., 1985--86)},
  volume 1275 of {\em Lecture Notes in Math.}, pages 287--331. Springer,
  Berlin, 1987.

\bibitem[Sha92]{ShapiroBook}
Harold~S. Shapiro.
\newblock {\em The {S}chwarz function and its generalization to higher
  dimensions}, volume~9 of {\em University of Arkansas Lecture Notes in the
  Mathematical Sciences}.
\newblock John Wiley \& Sons, Inc., New York, 1992.
\newblock A Wiley-Interscience Publication.

\bibitem[Zey13]{Zey13}
Yunus~E. Zeytuncu.
\newblock {$L^p$} regularity of weighted {B}ergman projections.
\newblock {\em Trans. Amer. Math. Soc.}, 365(6):2959--2976, 2013.

\end{thebibliography}

\end{document}